\newtheorem{thm}{Theorem}[section]
\newtheorem{cor}{Corollary}[section]
\newtheorem{lem}{Lemma}[section]
\theoremstyle{definition}
\newtheorem*{theorem*}{Theorem}
\numberwithin{equation}{section}
\newcommand{\K}{\mathbb K}
\newcommand{\T}{\mathbb T}
\newcommand{\1}{\mathbbm 1}
\newcommand{\R}{\mathbb R}
\newcommand{\N}{\mathbb N}
\newcommand{\bigint}{\begin{picture}(10,10)
\put(-1,2){\line(1,0){10}}
\end{picture}\kern-14pt\int}
\newcommand{\sign}{\text{sign}}
\def\XXint#1#2#3{{\setbox0=\hbox{$#1{#2#3}{\int}$}
     \vcenter{\hbox{$#2#3$}}\kern-.5\wd0}}
\begin{document}

\title[Oscillation results for H\"older and Zygmund functions]
	  {Oscillation of generalized differences of H\"older and Zygmund functions}

\author[A. J. Castro]{Alejandro J. Castro}
\author[J. G. Llorente]{Jos\'e G. Llorente}
\author[A. Nicolau]{Artur Nicolau}

\address{\newline
        Alejandro J. Castro \newline
        Department of  Mathematics, Nazarbayev University, \newline
		010000 Astana, Kazakhstan}
\email{alejandro.castilla@nu.edu.kz}

\address{\newline
        Jos\'e G. Llorente, Artur Nicolau \newline
        Departament de Matem\`{a}tiques,  Universitat Aut\`onoma de Barcelona, \newline
        08193 Bellaterra, Spain}
\email{jgllorente@mat.uab.cat, artur@mat.uab.cat}

\thanks{The first author is partially supported by Spanish Government grant
MTM2016-79436-P. The last two authors are partially supported by the Generalitat de Catalunya, grant 2014SGR 75, and the Spanish Ministerio de Econom\'ia, grant MTM2014-51824-
P}

\keywords{Oscillation, H\"older functions, Zygmund class, Lipschitz functions, generalized differences, Martingales, Law of the Iterated Logarithm, Calder\'on-Zygmund operators}
\subjclass[2010]{26A24, 60G42, 60G46}

\footnotetext{Updated: \today.}

\begin{abstract}
In this paper we analyze the oscillation of functions having derivatives in the H\"older or Zygmund class in terms of generalized differences and prove that its growth is governed by a version of the classical Kolmogorov's Law of the Iterated Logarithm. A better behavior is obtained for functions in the Lipschitz class via an interesting connection with Calder\'on-Zygmund operators.
\end{abstract}

\maketitle

\section{Introduction and main results}\label{sect1}


\

We introduce the function spaces that will be used in the paper. For $0< \alpha < 1$,  $\Lambda_{\alpha}(\R^d ) $ denotes the space of { \it H\"older continuous} functions of exponent $\alpha $, that is, those $f: \R^d \to \R$ such that 
$$
\|f\|_{\alpha} :=\sup \frac{|f(x + h ) - f(x)|}{|h|^{\alpha}}  < \infty,
$$
where the supremum is taken over all $x\in \R^d$ and all $h\in \R^d \setminus \{ 0 \}$. The case $\alpha = 1$, will require an special treatment. We define the {\it Zygmund class} $\Lambda_{1}(\R^d )$  consisting of all continuous functions $f: \R^d \to \R$ such that
$$
\|f\|_{1} := \sup\frac{|f(x+h) + f(x-h) - 2f(x)|}{|h|} < \infty 
$$
and the {\it Lipschitz class}  $\text{Lip}(\R^d )$, consisting of those $f: \R^d \to \R$ for which 
$$
 \|f\|_{\text{Lip}} := \sup \frac{|f(x+h) - f(x)|}{|h|} < \infty,
$$
the supremum being taken over all $x\in \R^d$ and all $h \in \R^d \setminus \{ 0 \}$ in both cases.  

 While the Lipschitz class coincides formally with $\Lambda_{\alpha}$ for $\alpha =1$, it is more convenient in this paper to reserve the notation $\Lambda_1$ for the Zygmund class, the natural substitute of the Lipschitz class in many problems in Analysis. Observe that $\displaystyle \text{Lip}(\R^d ) \subset \Lambda_{1}(\R^d ) \subset \Lambda_{\alpha}(\R^d )$ if $0< \alpha < 1$ and that the modulus of continuity of Zygmund functions is $\displaystyle O \big( t \log (1/t)\big)$ (\cite{S}). \\

Now let $m\geq 0$ be an integer and $0< \alpha \leq 1$. We define $C^{m, \alpha}(\R^d )$ as  the space of functions $f: \R^d \to \R$ such that $f$ is $m$ times differentiable in $\R^d$ and all the derivatives of $f$ of order $m$ belong to $\Lambda_{\alpha}(\R^d )$. We say that $k= (k_1 ,..., k_d )$ is a multiindex if $k_i \geq 0$ are integers for $i=1, \cdots , d$ and we call $|k| := k_1 + \cdots + k_d$ the degree of $k$. For $x= (x_1 ,..., x_d )\in \R^d$ we use the standard notations $\displaystyle x^k = x_1^{k_1}\cdots x_d^{k_d} $ and 
$$
\partial^k  := \frac{\partial^{k_1}}{\partial x_1^{k_1}}\cdots \frac{\partial^{k_d}}{\partial x_d^{k_d}}
$$
for the higher derivatives. If $f\in C^{m, \alpha}(\R^d )$, we denote 
$$
\|f\|_{m, \alpha} := \sum_{|k|= m} \|\partial^k f \|_{\alpha}.
$$

\

In this paper we will study oscillation properties of functions in the spaces $C^{m, \alpha}(\R^d )$ in terms of generalized differences. We first recall some classical facts and previous results. While a Lipschitz function is differentiable almost everywhere by a classical result of Rademacher (\cite{Rad}), the situation can change dramatically for H\"older continuous functions and Zygmund functions, even if $d=1$. Hardy showed in \cite{H} that  if $b>1$ and  $0< \alpha \leq 1$ then the Weierstrass function $f_{b, \alpha} : \R \to \R $ given by
$$
f_{b, \alpha} (x) := \sum_{k=0}^{\infty} b^{-\alpha k}\cos (b^k x)
$$ 
satisfies $f_{b, \alpha} \in \Lambda_{\alpha}(\R )$ and $f_{b, \alpha}$ is nowhere differentiable. See also \cite{S}, pag. 149. If $0 < \alpha < 1$, Hardy  actually proved an stronger result: 
\begin{equation}\label{hardy}
\limsup_{h\to 0} \frac{|f_{b, \alpha} (x + h) - f_{b, \alpha} (x)|}{|h|^{\alpha}} > 0,
\end{equation}
for each $x\in \R$. 

\

Inspired by earlier work of Lyubarskii and Malinnikova (\cite{LM}), the last two authors of the present paper   introduced in \cite{LlN} a quantitative way of measuring the oscillation of H\"older continuous functions. Let $f\in \Lambda_{\alpha}(\R )$, with $0< \alpha <1$. For $0 < \varepsilon < 1/2$ and $x\in \R$, define 
\begin{equation}\label{Theta0}
\Theta_{\varepsilon}f(x) := \int_{\varepsilon}^1 \frac{f(x+h) - f(x-h)}{h^{\alpha}} \frac{dh}{h}.
\end{equation}
Observe that  $\displaystyle \|\Theta_{\varepsilon}f \|_{\infty} \leq C \log  ( 1/\varepsilon  )$ and that such global bound cannot be improved, as the function $f(x) = |x|^{\alpha} sgn(x)$ shows. However, the growth rate $\log (1 /\varepsilon ) $  can be substantially improved for almost all $x\in \R$. Indeed, the following Law of the Iterated Logarithm was obtained in \cite{LlN}: 

$$
\limsup_{\varepsilon \to 0} \frac{|\Theta_{\varepsilon}f(x)|}{\sqrt{\log  ( 1/\varepsilon )  \log \log  \log \big ( 1/ \varepsilon )}} \leq C(f, \alpha ) < \infty,
$$
for almost all $x\in \R$. This result admits possible extensions in at least three directions: 
\begin{itemize}
\item[i)] higher dimensional analogues and functions of higher order of differentiability;
\item[ii)] the case $\alpha =1$, which was not covered in \cite{LlN}; and  \item[iii)] the use of other asymmetric differences in  \eqref{Theta0} instead of the symmetric difference $f(x+h) - f(x-h)$.   
\end{itemize}
\quad\\
Let $\sigma$ be a (signed) compactly supported Borel measure in $\R^d$ with finite total variation and $\sigma (\R^d)=0$. For a locally integrable function $f : \R^d \to \R$ we define the {\it generalized differences} associated to $\sigma$ as 
\begin{equation}\label{Delta}
\Delta_{\sigma}f(x,h) := \int_{\R^d} f(x + hw) d\sigma (w),
\end{equation}
where $x\in \R^d$ and $h>0$. Observe that, if $d=1$, we recover the usual  symmetric first order  difference $f(x+h) - f(x-h)$ and the symmetric second order difference $f(x+h) + f(x-h) - 2f(x)$ by choosing $\sigma := \delta_{1} - \delta_{-1}$ or $\sigma := \delta_1 + \delta _{-1} -2\delta_{0}$ in \eqref{Delta} respectively. Note that this setting also captures the classical difference operators defined by 
$$\Delta_1 f(x,h) := f(x+h) - f(x)$$ 
and 
$$\Delta_k f(x,h) := \Delta_1 (\Delta_{k-1})f(x,h).$$ 
Actually, it is easy to see that 
$$
\Delta_k f(x,h) = \sum_{j=0}^k (-1)^{k+j} \binom{k}{j} f(x + jh), 
$$
so $\Delta_k = \Delta_{\sigma}$ for the choice 
$$
\sigma := \sum_{j=0}^k (-1)^{k+j} \binom{k}{j} \delta_{j}.
$$

The first and second order symmetric differences have been extensively used in  Real Analysis and there are a number of beautiful classical results on the interaction between the symmetric differences and the usual derivatives. One of the most celebrated is the theorem of Khintchine (\cite{K}) according to which a measurable function $f:\R \to \R $ is differentiable at almost every point $x$  at which 
$$
\limsup_{h\to 0} \frac{f(x+h)-f(x-h)}{2h} < +\infty.
$$
See \cite{Th} for this and other
related results. A version of Khintchine's theorem for certain asymmetric differences was obtained by Valenti (\cite{V}). \\

Now, if $f\in C^{m, \alpha}(\R^d )$, and $\sigma$ and $\Delta_{\sigma}$ are as above, we define the {\it oscillation function} associated to $f$ and $\sigma$ as 
\begin{equation}\label{Theta}
\Theta_{\varepsilon}^{\sigma} f(x) := \int_{\varepsilon }^1 \frac{\Delta_{\sigma}f(x,h)}{h^{m+\alpha}} \frac{dh}{h},
\end{equation}
for $x\in \R^d$ and $0< \varepsilon < 1/2$. From Lemma \ref{lemmawellknown} in Section \ref{sect2} below, it follows that $f \in C^{m,\alpha}$  if and only if there exists a constant $C>0$ such that $\sup \{|\Delta_{\sigma}f (x,h) | : x \in \R^d\} \leq C \|\sigma \| h^{m+\alpha } $ 
for any compactly supported $\sigma $ of finite total variation with vanishing moments of order at most $[m + \alpha ]$. Here $\|\sigma\|$ denotes the total variation of $\sigma$ in $\R^d$. In particular  $\displaystyle \|\Theta^{\sigma}_{\varepsilon} \|_{\infty} = O \big (\log ( 1 / \varepsilon ) \big ) $ for such $\sigma$'s  and, as mentioned above, this estimate is sharp. The following theorem, the main result of this paper, shows that, as in \cite{LlN}, this global bound can be substantially improved by means of the corresponding Law of the Iterated Logarithm. 

\begin{thm}\label{main}
Let $m\geq 0$ be an integer, $0< \alpha \leq 1$ and $f\in C^{m, \alpha}(\R^d )$. Suppose that $\sigma$ is a compactly supported (signed) Borel measure on $\R^d$ of finite total variation such that 
\begin{equation}\label{momentos}
\int_{\R^d} x^k d\sigma (x) = 0,
\end{equation}
for any multiindex $k$ with $ 0 \leq |k| \leq [m+\alpha]$. Let $\Theta^{\sigma}_{\varepsilon} f$ be as in \eqref{Theta}. Then, there exits a  constant $C = C(m,d, \sigma)>0$ such that 
$$
\limsup_{\varepsilon \to 0} \frac{|\Theta^{\sigma}_{\varepsilon}f(x)|}{\sqrt{\log \big(1/\varepsilon \big) \log \log \log \big( 1/\varepsilon\big) }} \leq C  \|f\|_{m, \alpha} \, , 
$$
for almost all $x\in \R^d$. 
\end{thm}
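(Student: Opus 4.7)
The plan is to reduce the theorem to the classical Law of the Iterated Logarithm for bounded dyadic martingale differences (Stout's LIL), by discretizing $\Theta^{\sigma}_{\varepsilon} f$ in scale and extracting a martingale from the resulting dyadic sum.

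First, I would dyadically discretize. For each positive integer $N$ and $\varepsilon = 2^{-N}$, write $\Theta^{\sigma}_{2^{-N}} f(x) = \sum_{n=0}^{N-1} b_n(x)$, where
\begin{equation*}
b_n(x) := \int_{2^{-n-1}}^{2^{-n}} \frac{\Delta_\sigma f(x,h)}{h^{m+\alpha}}\,\frac{dh}{h}.
\end{equation*}
By Lemma \ref{lemmawellknown}, the vanishing moment condition \eqref{momentos} gives $|\Delta_\sigma f(x,h)| \leq C \|f\|_{m,\alpha}\, h^{m+\alpha}$, so each $b_n$ is uniformly bounded by $C(\sigma) \|f\|_{m,\alpha}$. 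Passing from $\varepsilon \in (2^{-N-1}, 2^{-N}]$ to $\varepsilon = 2^{-N}$ introduces only a bounded error, so it suffices to control $S_N(x) := \sum_{n=0}^{N-1} b_n(x)$ as $N \to \infty$.

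Next, I would introduce the dyadic filtration $(\mathcal{F}_n)$ of $\mathbb{R}^d$ generated by cubes of side $2^{-n}$, and define the martingale approximation $d_n(x) := b_n(x) - E(b_n \mid \mathcal{F}_n)(x)$. The crucial property is that $b_n$ is essentially localized at scale $2^{-n}$: since $\sigma$ annihilates polynomials of degree $\leq [m+\alpha]$, one may replace $f$ by $f - T^{x_Q}_{[m+\alpha]} f$ inside $\Delta_\sigma$, where $x_Q$ is the center of the dyadic cube $Q \in \mathcal{F}_n$ containing $x$ and $T^{x_Q}_{[m+\alpha]} f$ is the corresponding Taylor polynomial, without changing $b_n$. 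A careful analysis then shows that, up to a cumulatively summable error, $(d_n)$ is a bounded martingale difference sequence with $\|d_n\|_{\infty} \leq C \|f\|_{m,\alpha}$. Stout's LIL then yields
\begin{equation*}
\limsup_{N \to \infty} \frac{\bigl| \sum_{n=0}^{N-1} d_n(x) \bigr|}{\sqrt{N \log \log N}} \leq C \|f\|_{m,\alpha} \quad \text{for a.e. } x \in \mathbb{R}^d,
\end{equation*}
and since $N \asymp \log(1/\varepsilon)$ and $\log \log N \asymp \log \log \log (1/\varepsilon)$, this gives the desired bound.

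I expect the main obstacle to lie in the construction of the martingale approximation, specifically in showing that the errors $b_n - d_n$ accumulate to something of smaller order than $\sqrt{N \log \log N}$. The geometry of $\sigma$ is unrelated to the dyadic grid, so the Taylor remainder produced by the localization argument must be estimated uniformly in $x$ and across all scales simultaneously. The boundary case $\alpha = 1$ is the most delicate, since one only has Zygmund regularity for the top derivatives $\partial^k f$ with $|k|=m$; the modulus of continuity is then merely $O(t \log (1/t))$, and the extra logarithmic loss must be absorbed by invoking the additional vanishing moment of order $[m+1] = m+1$ available from \eqref{momentos}.
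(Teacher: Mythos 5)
Your overall strategy --- reduce to a bounded-increment dyadic martingale and invoke the LIL, with the moment cancellation of $\sigma$ entering through a Taylor-polynomial subtraction --- is exactly the skeleton of the paper's proof, and your remarks about the $\alpha=1$ case needing the extra moment and a logarithmic loss are on target. However, the specific martingale construction has a genuine gap. The quantity $d_n := b_n - E(b_n \mid \mathcal{F}_n)$ is not $\mathcal{F}_n$-measurable (since $b_n$ is a genuinely continuous function of $x$, not piecewise constant on cubes of side $2^{-n}$), so $(d_n)$ is not a martingale difference sequence for $(\mathcal{F}_n)$ in any standard sense. More seriously, even granting some fix for measurability, you would still have to control the residual $\sum_{n=0}^{N-1} E(b_n \mid \mathcal{F}_n)(x)$, and there is no reason this should be $o(\sqrt{N \log\log N})$: each term $E(b_n\mid\mathcal{F}_n)(x) = \fint_{Q_n(x)} b_n\,dm_d$ is only $O(1)$ in general (since the cancellation estimate of Lemma~\ref{lemmacrutial1}(c) needs $h<\ell(Q)/2$, while $b_n$ integrates over $h\in[2^{-n-1},2^{-n}]$ where $h\sim\ell(Q_n)$), and there is no cancellation between scales built into your decomposition. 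The sum is therefore $O(N)$, which swamps the LIL bound.

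What the paper does differently, and what you are missing, is to average the \emph{entire} improper integral over dyadic cubes: for $Q\in\mathcal{D}_n$ one sets $S_Q := \int_0^1 \fint_Q \Delta_\sigma f(y,h)\,dm_d(y)\,\frac{dh}{h^{m+\alpha+1}}$ (convergent by the cancellation estimate of Lemma~\ref{lemmacrutial1}(c)), and then $S_n|_Q := S_Q$ is by construction $\mathcal{F}_n$-measurable and is a dyadic martingale with no error term. The scale decomposition in your $b_n$'s is thereby avoided. Two separate inputs then finish the job: Lemma~\ref{lemmaincrements} shows $|S_Q - S_{Q'}|\leq C\|f\|_{m,\alpha}$ for adjacent $Q,Q'$, which combined with the martingale property gives uniformly bounded increments; and Lemma~\ref{lemmacompar} shows $|S_Q - \Theta^\sigma_\varepsilon f(x)|\leq C\|f\|_{m,\alpha}$ for $x\in Q$ and $\varepsilon\sim\ell(Q)$, which is the comparison you never establish. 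Both lemmas split $\int_0^1$ at $h\sim\ell(Q)$ and use the cancellation estimate for small $h$ together with the oscillation estimate of Lemma~\ref{lemmacrutial1}(a) for large $h$. Your proposal contains the right ingredients (Taylor subtraction, moment cancellation, the $\alpha=1$ refinement) but lacks this clean martingale definition and the comparison lemma that ties it to $\Theta^\sigma_\varepsilon f$.
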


For particular choices of the measure $\sigma$ we get the following immediate consequences. 

\begin{cor}\label{1}
Let $0 < \alpha < 1$ and $f\in \Lambda_{\alpha}(\R^d ) $. Let $p\in \N$ , $a_1 , ..., a_p \in \R^d$ and $\mu_1 , ..., \mu_p \in \R$ such that $\displaystyle \sum_{i=1}^p \mu_i  = 0$. Define  
$$
\Gamma_{\varepsilon}f(x) := \int_{\varepsilon}^1 \frac{\sum_{i=1}^p \mu_i f(x+ a_i h) }{h^{\alpha}} \frac{dh}{h},
$$
for $x\in \R^d$. Then, there exists a positive constant $C$ depending only on $d$, $\alpha$, $\|f\|_{\alpha}$ and $a_1$,...$a_p$ such that 
$$
\limsup_{\varepsilon \to 0} \frac{|\Gamma_{\varepsilon}f(x)|}{\sqrt{\log \big(1/\varepsilon \big) \log \log \log \big( 1/\varepsilon\big) }} \leq C ,
$$
for almost all $x\in \R^d$. 
\end{cor}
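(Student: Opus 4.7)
The plan is to reduce the corollary to the Main Theorem by the obvious choice of measure, namely $\sigma:=\sum_{i=1}^p \mu_i\,\delta_{a_i}$, and check that the hypotheses of the theorem hold with $m=0$. Since $a_1,\dots,a_p\in\mathbb{R}^d$, the measure $\sigma$ is a finite combination of Dirac masses, so it is compactly supported, and its total variation $\|\sigma\|=\sum_{i=1}^p|\mu_i|$ is finite. With $0<\alpha<1$ one has $[m+\alpha]=[\alpha]=0$, so the moment condition \eqref{momentos} reduces to the single requirement $\sigma(\mathbb{R}^d)=0$, which is precisely the assumption $\sum_{i=1}^p\mu_i=0$.

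Next, I would compute the generalized difference from the definition \eqref{Delta}:
$$
\Delta_\sigma f(x,h)=\int_{\mathbb{R}^d} f(x+hw)\,d\sigma(w)=\sum_{i=1}^p \mu_i f(x+a_i h),
$$
which is exactly the numerator in the definition of $\Gamma_\varepsilon f(x)$. Substituting into \eqref{Theta} with $m=0$ yields $\Gamma_\varepsilon f(x)=\Theta_\varepsilon^\sigma f(x)$, so the two oscillation functions coincide.

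Since $f\in\Lambda_\alpha(\mathbb{R}^d)=C^{0,\alpha}(\mathbb{R}^d)$ with $\|f\|_{0,\alpha}=\|f\|_\alpha$, the Main Theorem applied to this $\sigma$ provides a constant $C_0=C_0(0,d,\sigma)$ such that
$$
\limsup_{\varepsilon\to 0}\frac{|\Theta_\varepsilon^\sigma f(x)|}{\sqrt{\log(1/\varepsilon)\log\log\log(1/\varepsilon)}}\le C_0\,\|f\|_\alpha
$$
almost everywhere. Setting $C:=C_0\,\|f\|_\alpha$ gives the statement; the dependence of $C_0$ on $\sigma$ translates, through the construction above, into a dependence on $d$, $\alpha$ and the points $a_1,\dots,a_p$ (the coefficients $\mu_1,\dots,\mu_p$ contribute only through $\|\sigma\|$, which can be absorbed), while the factor $\|f\|_\alpha$ accounts for the dependence on $f$.

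There is no real obstacle; the entire content of the corollary is packaged in the verification that the discrete measure $\sigma=\sum_i\mu_i\delta_{a_i}$ falls within the scope of the Main Theorem for $m=0$. The only minor point worth stating explicitly in the write-up is that \eqref{momentos} is trivially satisfied for multiindices $k$ of degree $0$ when $\sum_i\mu_i=0$, since $[m+\alpha]=0$ means no higher-order moment conditions are imposed.
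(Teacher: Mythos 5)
Your proof is correct and follows exactly the route the paper intends: the corollary is stated as an ``immediate consequence'' of Theorem~\ref{main}, obtained by taking $\sigma=\sum_{i=1}^p\mu_i\delta_{a_i}$ and $m=0$, so that $[m+\alpha]=0$ and the moment condition \eqref{momentos} reduces to $\sum_i\mu_i=0$. Your remark that the $\mu_i$ enter the constant only through $\|\sigma\|$ is a sensible reading of the (slightly loose) statement of dependence in the corollary.
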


\begin{cor}\label{2}
Let $f\in \Lambda_1 (\R^d )$. Let $p\in \N$ , $a_1 , ..., a_p \in \R^d$ and $\mu_1 , ..., \mu_p \in \R$ such that $\displaystyle \sum_{i=1}^p \mu_i  =  \sum_{i=1}^p \mu_i a_i = 0$. Define  
$$
\Omega_{\varepsilon}f(x) := \int_{\varepsilon}^1 \frac{\sum_{i=1}^p \mu_i f(x+ a_i h) }{h} \frac{dh}{h},
$$
for $x\in \R^d$. Then, there exists a positive constant $C$ depending only on $d$, $a_1$, ...,$a_p$ and $\|f\|_{1}$  such that
$$
\limsup_{\varepsilon \to 0} \frac{|\Omega_{\varepsilon}f(x)|}{\sqrt{\log \big(1/\varepsilon \big) \log \log \log \big( 1/\varepsilon\big) }} \leq C,
$$
for almost all $x\in \R^d$. 
\end{cor}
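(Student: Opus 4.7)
The proof is a direct specialization of Theorem \ref{main} to the case $m=0$, $\alpha=1$, after encoding the finite linear combination $\sum_{i=1}^{p} \mu_{i} f(x+a_{i}h)$ as a generalized difference $\Delta_{\sigma} f(x,h)$. Specifically, I would set
$$
\sigma := \sum_{i=1}^{p} \mu_{i}\, \delta_{a_{i}},
$$
which is a compactly supported (signed) Borel measure on $\R^{d}$ with finite total variation $\|\sigma\|=\sum_{i=1}^{p}|\mu_{i}|$. By definition \eqref{Delta},
$$
\Delta_{\sigma}f(x,h) = \int_{\R^{d}} f(x+hw)\, d\sigma(w) = \sum_{i=1}^{p} \mu_{i}\, f(x+a_{i}h),
$$
so with $m+\alpha=1$ the oscillation function \eqref{Theta} becomes exactly $\Theta^{\sigma}_{\varepsilon}f(x)=\Omega_{\varepsilon}f(x)$.

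Next I verify the moment conditions \eqref{momentos}. Since $m=0$ and $\alpha=1$, we have $[m+\alpha]=1$, so we need $\int x^{k}d\sigma(x)=0$ for every multiindex $k$ with $|k|\in\{0,1\}$. The case $|k|=0$ reads $\sum_{i=1}^{p}\mu_{i}=0$, which is the first hypothesis. For $|k|=1$, picking $k=e_{j}$ (the $j$-th standard multiindex) gives
$$
\int_{\R^{d}} x_{j}\, d\sigma(x) = \sum_{i=1}^{p} \mu_{i}\, (a_{i})_{j},
$$
and these vanish for every $j=1,\dots,d$ precisely because $\sum_{i=1}^{p}\mu_{i}a_{i}=0$ as vectors in $\R^{d}$. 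Thus $\sigma$ satisfies all the hypotheses of Theorem \ref{main}.

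Applying Theorem \ref{main} to this $\sigma$ and noting $\|f\|_{0,1}=\|f\|_{1}$ (the Zygmund norm), we obtain a constant $C=C(d,\sigma)=C(d,a_{1},\dots,a_{p},\mu_{1},\dots,\mu_{p})$ such that
$$
\limsup_{\varepsilon\to 0}\frac{|\Omega_{\varepsilon}f(x)|}{\sqrt{\log(1/\varepsilon)\log\log\log(1/\varepsilon)}} \leq C\,\|f\|_{1},
$$
for almost every $x\in\R^{d}$, which is the desired conclusion. There is no real obstacle here: the only point worth emphasizing is the correspondence between the two hypotheses of the corollary and the order-$0$ and order-$1$ moment conditions in \eqref{momentos}, and the fact that for $m=0, \alpha=1$ no higher moments need to vanish since $[m+\alpha]=1$.
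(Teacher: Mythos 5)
Your proposal is correct and is exactly the route the paper intends: the corollaries are stated as immediate consequences of Theorem \ref{main}, obtained by choosing $\sigma=\sum_{i=1}^{p}\mu_i\delta_{a_i}$ and checking that the hypotheses $\sum\mu_i=0$ and $\sum\mu_i a_i=0$ are precisely the vanishing of the order-$0$ and order-$1$ moments required by \eqref{momentos} when $m=0$, $\alpha=1$ (so $[m+\alpha]=1$). The only cosmetic remark is that, as you note, the constant produced by Theorem \ref{main} also depends on the weights $\mu_1,\dots,\mu_p$ through $\sigma$, a dependence the corollary's statement leaves implicit.
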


The following application follows from the choice $\sigma = \omega - \delta_0$, where $\omega$ is  the normalized surface measure on the unit sphere $\mathbb{S}^{d-1}$ of $\R^d$.

\begin{cor}
Let $0 < \alpha \leq 1$, $f\in \Lambda_{\alpha}(\R^d )$ and $\omega$
be the normalized surface measure on the unit sphere $\mathbb{S}^{d-1}$ of $\R^d$. Define 
$$
\mathcal{M}_{\varepsilon}f(x) := \int_{\varepsilon}^1 \int_{\mathbb{S}^{d-1}} [f(x + h\xi ) - f(x)]d\omega ( \xi ) \frac{dh}{h^{\alpha +1}}.
$$
Then, there exists a positive constant $ C $ depending only on $d$, $\alpha$ and $\|f\|_{\alpha}$ such that 
$$
\limsup_{\varepsilon \to 0} \frac{|\mathcal{M}_{\varepsilon}f(x)|}{\sqrt{\log \big(1/\varepsilon \big) \log \log \log \big( 1/\varepsilon\big) }} \leq C .
$$
\end{cor}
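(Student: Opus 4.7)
The plan is to apply Theorem \ref{main} with the specific choice $\sigma := \omega - \delta_0$ suggested in the excerpt, taking $m = 0$ (so that $f \in \Lambda_\alpha(\R^d) = C^{0,\alpha}(\R^d)$). The argument then reduces to two routine verifications: that $\sigma$ satisfies the hypotheses of Theorem \ref{main}, and that the resulting oscillation function $\Theta_\varepsilon^\sigma f$ coincides with $\mathcal{M}_\varepsilon f$.

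First I would observe that $\sigma$ is a signed Borel measure with compact support contained in $\mathbb{S}^{d-1} \cup \{0\}$ and with total variation $\|\sigma\| \le 2$, so the basic regularity assumption is met. For the moment conditions \eqref{momentos}, the relevant range of multi-index degrees is $0 \le |k| \le [m+\alpha] = [\alpha]$, which equals $\{0\}$ if $0 < \alpha < 1$ and $\{0,1\}$ if $\alpha = 1$. The degree-zero moment is
$$
\sigma(\R^d) = \omega(\mathbb{S}^{d-1}) - 1 = 0.
$$
The degree-one moments, only needed when $\alpha = 1$, are
$$
\int_{\R^d} x_i\, d\sigma(x) = \int_{\mathbb{S}^{d-1}} \xi_i\, d\omega(\xi) = 0, \qquad i = 1, \dots, d,
$$
by the antipodal symmetry of the uniform surface measure $\omega$. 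Hence \eqref{momentos} is satisfied in both cases.

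Next I would match up the oscillation functions. From the definition \eqref{Delta} and the trivial identity $\int_{\mathbb{S}^{d-1}} f(x)\, d\omega(\xi) = f(x)$,
$$
\Delta_\sigma f(x,h) = \int_{\mathbb{S}^{d-1}} f(x + h\xi)\, d\omega(\xi) - f(x) = \int_{\mathbb{S}^{d-1}} \bigl[f(x + h\xi) - f(x)\bigr]\, d\omega(\xi),
$$
so that \eqref{Theta} with $m=0$ gives $\Theta_\varepsilon^\sigma f(x) = \mathcal{M}_\varepsilon f(x)$ pointwise. An application of Theorem \ref{main} with this $\sigma$ then produces exactly the claimed almost-everywhere LIL bound, with a constant depending only on $d$, $\alpha$ (through $\sigma$) and $\|f\|_\alpha$. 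Since all the probabilistic and analytic content has been absorbed into the Main Theorem, the proof presents no real obstacle beyond the verification above; the only thing that requires a moment of thought is the case split at $\alpha = 1$, where the first-order vanishing moments of $\omega$ must be used.
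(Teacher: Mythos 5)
Your proposal is correct and takes essentially the same route as the paper, which simply observes that the corollary follows from Theorem \ref{main} with $\sigma = \omega - \delta_0$ and $m=0$. Your explicit verification of the moment conditions — including the case split at $\alpha = 1$ where the first moments of $\omega$ must vanish by antipodal symmetry — is exactly the intended argument, just spelled out in more detail than the paper bothers to record.
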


In the case $d=1$, $0< \alpha < 1$, $p=2$, $a_1 =\mu_1= 1$ and $a_2 =\mu_2 =-1$, which corresponds to the oscillation function given by \eqref{Theta0}, the sharpness of Corollary \ref{1}  was noted in \cite[Section 5]{LlN}. As for the case $d=1$, $\alpha =1$, $p=3$, $a_1 =1$, $a_2 = -1$, $a_3 = 0$, 
$\mu_1=\mu_2=1$ and $\mu_3=-2$, which corresponds to the oscillation function
$$
\Upsilon_{\varepsilon} f(x) := \int_{\varepsilon}^{1} \frac{f(x+h) + f(x-h) - 2f(x) }{h} \, \frac{dh}{h},
$$
the sharpness of Corollary \ref{2} can be also established in the same way. Indeed, take $b>1$ and define the Weierstrass-type function 
$$
f(x) := \sum_{k=1}^{\infty} b^{-k} \cos (b^k x).
$$
It is well known that $f\in \Lambda_1 ( \R )$. Elementary computation shows that 
$$
\Upsilon_{\varepsilon}f(x) = \sum_{k=1}^{\infty} a_k (\varepsilon )\cos (b^k x),
$$
where
$$
a_k (\varepsilon ) := -2 \int_{b^k \varepsilon}^{b^k} \frac{1- \cos t}{t^2} dt.
$$
It can be shown by direct computation that there exists a constant $C(b) >0$ such that 
$$
\Big| \Upsilon_{\varepsilon}f(x) - \sum_{k=1}^{N(\varepsilon )}a_k (0)\cos (b^k x) \Big| \leq  C(b), \quad x \in \R, 
$$
where $N(\varepsilon ) $ is the smallest integer $n$ such that $\varepsilon b^n \geq 1$. Since $\displaystyle \lim_{k\to \infty} a_k (0) > 0$, the Law of the Iterated Logarithm for lacunary  trigonometric series (\cite{W}) shows that 
$$
\limsup_{\varepsilon \to 0} \frac{\Upsilon_{\varepsilon}f(x)}{\sqrt{\log \big(1/\varepsilon \big) \log \log \log \big( 1/\varepsilon\big) }} >0,
$$
for almost all $x\in \R$. \\

It is worth mentioning that Theorem \ref{main} holds due to certain cancellations which occur in the oscillation function $\Theta^{\sigma}_{\varepsilon}f$. It was already proved in  \cite{LlN} that for any $0< \alpha < 1$ there exists $f \in \Lambda_{\alpha} (\R)$ such that for almost every $x \in \R$ and for any $0< \varepsilon < 1/2$, we have 
\begin{equation}
 \int_{\varepsilon}^1  \left|\frac{f(x+h) - f(x-h)}{h^{\alpha}} \right| \frac{dh}{h} > \log (1/ \varepsilon) . 
\end{equation} 
In \cite{LlN} the LIL for the (symmetric) oscillation of Holder continuous functions was deduced from a subgaussian estimate which was proved in two steps. First dyadic martingales were used to obtain a discrete version of the subgaussian estimate and then an averaging technique due to Garnett and Jones was applied to transfer the result in the discrete setting to the continuous one.
Our approach now is more direct and simple. The main idea of the proof of Theorem \ref{main} is to approximate the oscillation function $\Theta^{\sigma}_{\varepsilon}f$, up to a bounded term, by a dyadic martingale with uniformly bounded increments. Then Theorem \ref{main} will follow  from the Law of the Iterated Logarithm (LIL) for such class of  martingales. The LIL is sharp at almost every point as the previous example with the Weierstrass type function shows. 
However, since martingales in this class are bounded at a set of maximal Hausdorff dimension (see \cite{M} for the one dimensional case $d=1$ and \cite{Ll} or  \cite{N} for $d>1$ ), the approximation also gives the following result.

\begin{cor}\label{bounded}
Let $m\geq 0$ be an integer, $0< \alpha \leq 1$ and $f\in C^{m, \alpha}(\R^d )$. Suppose that $\sigma$ is a compactly supported (signed) Borel measure on $\R^d$ of finite total variation such that \eqref{momentos} holds. 
 Let $\Theta^{\sigma}_{\varepsilon} f$ be as in \eqref{Theta}. Then, the set 
$$
\{x \in \R^d : \sup_{0 < \varepsilon < 1} |\Theta^{\sigma}_{\varepsilon} f (x)| < \infty \}
$$
has Haussdorff dimension $d$. 
\end{cor}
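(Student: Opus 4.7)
The plan for Corollary \ref{bounded} is to reduce the statement to a known fact about dyadic martingales with uniformly bounded increments, via the same martingale approximation that is the backbone of the proof of Theorem \ref{main}.

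More concretely, I expect the proof of Theorem \ref{main} to produce, for each $f \in C^{m,\alpha}(\R^d)$, a dyadic martingale $\{M_n\}_{n\geq 0}$ on $\R^d$ whose increments satisfy $|M_n(x) - M_{n-1}(x)| \leq C(m,d,\sigma)\,\|f\|_{m,\alpha}$ uniformly in $n$ and $x$, together with a uniform approximation
$$
\sup_{x \in \R^d}\, \abs{\Theta^\sigma_\varepsilon f(x) - M_{N(\varepsilon)}(x)} \leq C_1, \qquad 0 < \varepsilon < 1,
$$
where $N(\varepsilon) = \lceil \log_2 (1/\varepsilon) \rceil$ and $C_1$ depends only on $m$, $d$, $\sigma$ and $\|f\|_{m,\alpha}$. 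Since $\varepsilon \mapsto \Theta^\sigma_\varepsilon f(x)$ is continuous, such a dyadic approximation also controls $|\Theta^\sigma_\varepsilon f(x)|$ between consecutive dyadic scales. With this in place,
$$
\sup_{0<\varepsilon<1} \abs{\Theta^\sigma_\varepsilon f(x)} < \infty \Longleftrightarrow \sup_{n\geq 0} |M_n(x)| < \infty,
$$
so the set in Corollary \ref{bounded} differs by nothing of substance from the set where the martingale $\{M_n\}$ is bounded.

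It then remains to invoke the fact that a dyadic martingale on $\R^d$ with uniformly bounded increments is bounded on a set of \emph{maximal} Hausdorff dimension, namely $d$. In dimension one this is Makarov's theorem \cite{M}; the general case is carried out in \cite{Ll} and \cite{N}. Applied to $\{M_n\}$, this immediately yields that the set in the statement has Hausdorff dimension $d$.

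The only real obstacle is the construction of the approximating martingale itself. This is precisely the technical work carried out in the proof of Theorem \ref{main}: one uses the vanishing moments \eqref{momentos} to rewrite the integral defining $\Theta^\sigma_\varepsilon f$ as a telescoping sum across dyadic scales, whose terms are controlled pointwise by $\|f\|_{m,\alpha}$, and then recognizes the resulting partial sums as the values of a dyadic martingale. Once that martingale is in hand the corollary follows with no further work.
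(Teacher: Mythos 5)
Your proposal is correct and matches the paper's argument: the corollary follows directly from the uniform comparison $\|S_n - \Theta^\sigma_\varepsilon f\|_\infty \leq C'$ (for $2^{-n-2}\leq\varepsilon\leq 2^{-n-1}$) established in the proof of Theorem \ref{main}, combined with the known fact that dyadic martingales with uniformly bounded increments are bounded on a set of full Hausdorff dimension $d$ (\cite{M}, \cite{Ll}, \cite{N}). The paper's proof is a two-line citation of these same two ingredients, so there is no gap or deviation.
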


When acting on Lipschitz functions, the generalized oscillation operators introduced in this paper have a better behavior and also an interesting connection with Calder\'on-Zygmund theory as the following result shows. See \cite{G} or Section \ref{sect4} for definitions. 
\begin{thm}\label{CZthm}
Suppose that $d=1$, $f\in Lip (\R )$ with compact support and $\sigma$ is a (signed) compactly supported Borel measure on $\R$ with finite total variation such that \eqref{momentos} holds for $k=0,1$. Define 
\begin{equation}\label{CZ}
\widetilde{\Theta}_{\varepsilon}^{\sigma}f(x) := \int_{\varepsilon}^1 \Delta_{\sigma}f(x,h)\frac{dh}{h^2},
\end{equation}
for $x\in \R$. Then there exists a Calder\'on-Zygmund Kernel $K_0$ such that
$$
\sup_{0 < \varepsilon < 1} |\widetilde{\Theta}_{\varepsilon}^{\sigma}f(x) - \int_{ |t| > \varepsilon  M} K_0(t) f'(x-t) dt | 
$$
is uniformly bounded for $x \in \R$. In particular 
$$
\sup_{0 < \varepsilon < 1} | \widetilde{\Theta}_{\varepsilon}^{\sigma}f(x) | <  \infty,
$$
for a.e. $x\in \R$. 
\end{thm}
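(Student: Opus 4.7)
The goal is to rewrite $\widetilde{\Theta}_\varepsilon^\sigma f$ as a truncated convolution of $f'$ against an explicit kernel and then identify this kernel, up to a uniformly bounded error, with a truncation of a classical Calder\'on--Zygmund kernel on $\R$.

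Concretely, I would set $\Sigma(w):=\sigma((-\infty,w])$. The vanishing moment conditions $\int d\sigma=0$ and $\int w\,d\sigma(w)=0$ imply respectively that $\Sigma$ has compact support (say in $[-R,R]$) and that $\int_\R \Sigma(w)\,dw=0$ (the latter by an elementary integration by parts). An integration by parts in the Stieltjes integral defining $\Delta_\sigma f(x,h)$ yields
$$
\Delta_\sigma f(x,h) = -h\int_\R f'(x+hw)\,\Sigma(w)\,dw,
$$
and the substitutions $u=hw$, then $t=-u$, together with Fubini, give the convolution representation
$$
\widetilde{\Theta}_\varepsilon^\sigma f(x) = \int_\R K_\varepsilon(t)\,f'(x-t)\,dt, \qquad K_\varepsilon(t) := -\int_\varepsilon^1 \Sigma(-t/h)\,\frac{dh}{h^2}.
$$
The candidate kernel is the formal unrestricted version $K_0(t) := -\int_0^\infty \Sigma(-t/h)\,dh/h^2$. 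Treating $t>0$ and $t<0$ separately via $u=-t/h$ and using $\int \Sigma = 0$ to match the two one-sided values, one obtains
$$
K_0(t) = \frac{c}{t}, \qquad c := \int_0^\infty \Sigma(u)\,du = -\int_{-\infty}^0 \Sigma(u)\,du,
$$
so $K_0$ is odd and satisfies $|K_0(t)|+|t\,K_0'(t)| \lesssim |t|^{-1}$, a genuine one-dimensional CZ kernel.

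With $M:=R$, I would estimate the error $K_\varepsilon(t) - K_0(t)\,\mathbbm{1}_{|t|>R\varepsilon}$ in three regimes, exploiting that $\Sigma(-t/h)$ vanishes unless $h\geq |t|/R$. On $\{|t|\leq R\varepsilon\}$ the naive bound $|K_\varepsilon(t)|\leq \|\Sigma\|_\infty/\varepsilon$ has $L^1$-mass $\leq 2R\|\Sigma\|_\infty$. On $\{R\varepsilon<|t|\leq R\}$ the two integrals defining $K_\varepsilon$ and $K_0$ share the same effective lower endpoint $|t|/R$, so they telescope to the identity $K_\varepsilon(t)-K_0(t) = \int_1^\infty \Sigma(-t/h)\,dh/h^2$, which is pointwise bounded by $\|\Sigma\|_\infty$ on an interval of length $\leq 2R$. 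On $\{|t|>R\}$ the kernel $K_\varepsilon$ vanishes identically and $|K_0(t)|\leq |c|/|t|$, while the compact support of $f'$ keeps the resulting convolution bounded (and in fact makes it decay at infinity). Multiplying by $\|f'\|_\infty<\infty$ then yields the desired uniform bound in $x\in\R$ and $0<\varepsilon<1$, which is the first assertion of the theorem.

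The a.e.\ finiteness of $\sup_\varepsilon |\widetilde{\Theta}_\varepsilon^\sigma f(x)|$ would then follow from the first assertion combined with the classical fact that the maximal truncated singular integral associated to a Calder\'on--Zygmund kernel is finite almost everywhere on $L^p$ data, applied to $f'\in L^\infty\cap L^p(\R)$ for every $p$. The main technical obstacle is the kernel comparison, and the decisive fact powering it is the identity $\int \Sigma = 0$, which is precisely the vanishing of the first moment of $\sigma$: it is exactly what forces the two one-sided scaling integrals defining $K_0$ to glue into an odd function of $t$, and hence into a bona fide CZ kernel rather than a generic homogeneous kernel of degree $-1$.
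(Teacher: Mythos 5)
Your proposal is correct and works, but it chooses a genuinely different Calder\'on--Zygmund kernel than the paper does, which changes where the work goes. The paper's Lemma~\ref{Lem:Lip1conv} derives the same convolution identity (by Fubini on the Stieltjes integral, which is equivalent to your integration by parts against $\Sigma(w)=\sigma((-\infty,w])$), but then the paper takes $K_0(t)=\tfrac1t\int_{-\sign(t)M}^{-t}\sigma[s,\infty)\,ds$, i.e.\ precisely the $\varepsilon\to0$ limit of $K_\varepsilon$ with the support cut off at $|t|<M$. This makes the error $K_\varepsilon-K_0$ vanish identically for $|t|>\varepsilon M$, so the comparison reduces to one trivial estimate on $\{|t|\le\varepsilon M\}$; the price is that Lemma~\ref{Lem:estimacionesK0} must verify by hand that this rather opaque kernel satisfies the size, smoothness and cancellation conditions, and verifying the cancellation condition is where the first moment $\int w\,d\sigma=0$ (equivalently $\int\Sigma=0$) is used. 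You instead extend the $h$--integration to all of $(0,\infty)$ and observe that, by the scaling substitution together with $\int\Sigma=0$, the resulting kernel collapses exactly to $K_0(t)=c/t$ with $c=\int_0^\infty\Sigma$. That makes the Calder\'on--Zygmund property completely tautological (it is the Hilbert kernel), but in exchange you must estimate $K_\varepsilon-K_0\mathbbm{1}_{\{|t|>\varepsilon M\}}$ in the three regimes $|t|\le\varepsilon M$, $\varepsilon M<|t|\le M$, and $|t|>M$; your estimates there are correct (the second regime uses the telescoping identity $K_\varepsilon-K_0=\int_1^\infty\Sigma(-t/h)\,dh/h^2$, and the third uses compact support of $f'$ since $c/t$ is not integrable at infinity). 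Both routes prove the theorem; yours has the merit of making explicit that the general $K_0$ is a scalar multiple of the Hilbert kernel, which cleanly explains and corrects the remark in the introduction about the special case $\sigma=\delta_1+\delta_{-1}-2\delta_0$ (where $c=-1$ and $K_0(t)=-1/t$), while the paper's choice of $K_0$ keeps the comparison step trivial at the cost of a nontrivial CZ-verification lemma.
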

It will be shown that the Calder\'on-Zygmund Kernel $K_0$ depends directly on the measure $\sigma$ and the relevant size and cancellation properties of $K_0$ will follow from condition  \eqref{momentos}. It is worth mentioning that when $\sigma = \delta_{1} + \delta_{-1} - 2 \delta_0$, it turns out that $K_0 (t) = sign(t) / t$, $t \neq 0$ and hence in this case $\widetilde{\Theta}_{\varepsilon}^{\sigma}f(x) $ is, up to a uniformly bounded term, the truncated Hilbert transform of $f'$.  

\

The structure of the paper is as follows. Section \ref{sect2} contains some auxiliary lemmas that will be basic in the proof of Theorem \ref{main}. In Section \ref{sect3} we will see  that the oscillation function $\Theta^{\sigma}_{\varepsilon}f$ can be approximated, up to a bounded term, by a dyadic martingale with uniformly bounded increments. Theorem \ref{main} and Corollary \ref{bounded} will follow  easily. Finally, the connection between the operators  $\widetilde{\Theta}_{\varepsilon}^{\sigma}$ and Calder\'on-Zygmund operators in the Lipschitz context is given in Section \ref{sect4}. \\

{ \bf Notation}. We will denote by $m_d$ the Lebesgue $d$-dimensional measure in $\R^d$. All the cubes considered in the paper are understood to have parallel sides to the hyperplane coordinates. We denote by $\ell(Q)$ the side length of the cube $Q$. Two cubes are said adjacent if they have the same side length and a common face. If $\beta>0$, $[\beta]$ stands for the smallest integer less or equal than $\beta$.

\section{Auxiliary Results}\label{sect2}

Our first auxiliary result collects several well known descriptions of  functions in $C^{m,\alpha}({\R}^d)$.

\begin{lem}\label{lemmawellknown}
Let $m \geq 0$ be an integer and $0< \alpha \leq 1$. Let $f: {\R}^d \rightarrow \R$ be a bounded continuous function. The following conditions are equivalent: 
    
    \begin{itemize}
        \item[$a)$] $f \in C^{m,\alpha}({\R}^d) $. 
       
        \item[$b)$] For any integer $\ell > [m+ \alpha]$, there exists a constant $C_\ell >0$ such that $$|\Delta_{\ell}f(x,h)| \leq C_l  h^{m+\alpha}, \quad x \in {\R}^d, \ h>0.$$ 
                
        \item[$c)$] There is a constant $K_1>0 $ such that for any ball $B \subset {\R}^d $ of radius $r(B)$, there exists a polynomial $P_B$ of degree less or equal to $[m + \alpha]$ such that 
        $$ 
        \sup_{x \in B} |f(x) - P_B (x)| < K_1  r(B)^{m+\alpha}.
        $$
        
         \item[$d)$] For any finite compactly supported measure $\sigma$ on ${\R}^d$ for which \eqref{momentos} holds for any multiindex $k$ with $0\leq |k| \leq [m + \alpha]$, there is a constant $K_2>0 $ such that  
$$
|\Delta_{\sigma} f (x,h)| \leq K_2   h^{m + \alpha},  \quad x \in {\R}^d, \ h>0.
$$    
    \end{itemize}
\end{lem}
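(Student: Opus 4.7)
The plan is to prove the cycle $(a)\Rightarrow (c)\Rightarrow (d)\Rightarrow (b)\Rightarrow (a)$. The middle two arrows are short formal manipulations, while $(a)\Rightarrow (c)$ is Taylor-type approximation and $(b)\Rightarrow (a)$ is the main analytical step.

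For $(a)\Rightarrow (c)$, given a ball $B$ with center $x_0$ and radius $r$, my choice of $P_B$ depends on whether $\alpha<1$ or $\alpha=1$. In the H\"older case $0<\alpha<1$ I take $P_B$ to be the Taylor polynomial of $f$ at $x_0$ of degree $m=[m+\alpha]$; the integral form of Taylor's remainder combined with H\"older continuity of the $m$-th derivatives gives $|f(x)-P_B(x)|\leq C\|f\|_{m,\alpha}|x-x_0|^{m+\alpha}$ on $B$. In the Zygmund case $\alpha=1$, I instead take $P_B$ to be the Taylor polynomial at $x_0$ of degree $m+1=[m+1]$ of the mollification $f*\phi_r$, where $\phi_r$ is a smooth, radial, compactly supported mollifier at scale $r$. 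The bound $|f-P_B|\leq C\|f\|_{m,1}r^{m+1}$ follows from the triangle inequality: the Zygmund estimate on the $m$-th derivatives of $f$ gives $|f-f*\phi_r|\leq C\|f\|_{m,1}r^{m+1}$, while the Taylor remainder for $f*\phi_r$ is controlled by bounds on its $(m+2)$-th derivatives, which inherit a controlled blow-up from $\|f\|_{m,1}$.

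For $(c)\Rightarrow (d)$, set $M:=\sup\{|w|:w\in\operatorname{supp}\sigma\}$ and apply $(c)$ on $B=B(x,Mh)$ to obtain a polynomial $P_B$ of degree at most $[m+\alpha]$ with $|f-P_B|\leq K_1(Mh)^{m+\alpha}$ on $B$. Since $w\mapsto P_B(x+hw)$ is a polynomial in $w$ of degree at most $[m+\alpha]$ and $\sigma$ has vanishing moments up to that order, $\int P_B(x+hw)\,d\sigma(w)=0$, whence
$$
|\Delta_\sigma f(x,h)|=\Big|\int [f(x+hw)-P_B(x+hw)]\,d\sigma(w)\Big|\leq \|\sigma\| K_1 M^{m+\alpha}h^{m+\alpha}.
$$
For $(d)\Rightarrow (b)$, observe that $\Delta_\ell=\Delta_{\sigma_\ell}$ with $\sigma_\ell=\sum_{j=0}^{\ell}(-1)^{\ell+j}\binom{\ell}{j}\delta_j$, and the identity $\sum_{j=0}^\ell(-1)^{\ell+j}\binom{\ell}{j}j^k=0$ for $0\leq k<\ell$ (obtained by differentiating $(1-x)^\ell$ at $x=1$) shows that $\sigma_\ell$ has vanishing moments of all orders less than $\ell$. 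Since $\ell>[m+\alpha]$ by assumption, $\ell-1\geq [m+\alpha]$, so $(d)$ applies and delivers $(b)$.

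The main obstacle is $(b)\Rightarrow (a)$, namely recovering the $m$-th derivatives of $f$ from bounds on its $\ell$-th order differences. I would proceed by smooth approximation: fix a radial bump $\phi\in C^{\infty}_c(\R^d)$ with $\int\phi=1$ and set $f_t:=f*\phi_t$. For each multiindex $k$ with $|k|=m$ one can express $\partial^k\phi_t$ as a finite linear combination of $\ell$-th order finite-difference kernels of a smooth profile at scale $t$ (with $\ell>[m+\alpha]$), so that $\partial^k f_t(x)$ becomes an integral average of expressions of the form $h^{-m}\Delta_\ell f(\cdot,h)$ with $h\sim t$. Hypothesis $(b)$ then yields uniform $\Lambda_\alpha$ bounds on $\{\partial^k f_t\}_{t>0}$, and an Arzel\`a-Ascoli argument produces a limit $g_k\in\Lambda_\alpha(\R^d)$ as $t\to 0^+$, which standard distributional arguments identify with $\partial^k f$. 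The delicate point is the Zygmund case $\alpha=1$: the radial symmetry of $\phi$ must be exploited so that odd-order terms cancel and a symmetric second-difference estimate emerges instead of a mere first-difference one, which is precisely why the hypothesis is $\ell>[m+\alpha]=m+1$ rather than only $\ell>m$.
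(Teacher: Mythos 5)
Your middle two arrows $(c)\Rightarrow(d)\Rightarrow(b)$ are exactly the paper's argument: you use the local polynomial $P_B$ and the moment condition to kill $\int P_B(x+hw)\,d\sigma(w)$, and you verify that $\sigma_\ell$ has vanishing moments up to order $\ell-1$. No issues there. The logical structure, however, differs from the paper's: the paper \emph{cites} the classical equivalences $(a)\Leftrightarrow(b)$ (Triebel, Krantz) and $(a)\Leftrightarrow(c)$ (Jonsson--Wallin) as known, and then only proves $(c)\Rightarrow(d)\Rightarrow(b)$ to close the diagram. You instead propose to re-derive the two classical implications $(a)\Rightarrow(c)$ and $(b)\Rightarrow(a)$ from scratch. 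That is a more self-contained route, and your $(a)\Rightarrow(c)$ sketch is sound: Taylor at the center of $B$ when $\alpha<1$, and for the Zygmund case $\alpha=1$ a Taylor polynomial of a mollification $f*\phi_r$, with the two error terms handled as you describe (the estimate $\|\partial^{m+2}(f*\phi_r)\|_\infty\lesssim\|f\|_{m,1}/r$ is the standard Bloch-type bound for smoothings of Zygmund functions, and it gives the remainder $O(r^{m+1})$).

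The gap is in $(b)\Rightarrow(a)$. The pivotal sentence ``one can express $\partial^k\phi_t$ as a finite linear combination of $\ell$-th order finite-difference kernels of a smooth profile at scale $t$'' is precisely where the whole difficulty of the classical implication lies, and as stated it is not obviously true: on the Fourier side you would need $\xi^k\hat\phi(t\xi)$ to factor through products of $(e^{i\langle\xi,h\rangle}-1)$ terms, and there is no canonical such factorization in $d>1$. The actual arguments in the cited references do not proceed by representing $\partial^k\phi_t$ as a combination of difference kernels; they either use the Poisson/heat extension and show that the $\ell$-th difference bound controls $y^{\,\ell-m-\alpha}|\partial_y^{\ell}u(x,y)|$, then integrate, or they use a Littlewood--Paley decomposition where the required vanishing of moments is built into the dyadic pieces. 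Either way, the heart of the proof is a nontrivial transference of the finite-difference bound to a bound on a continuous averaging operator, and your sketch assumes that transference rather than proving it. The subsequent Arzel\`a--Ascoli step is fine \emph{given} the uniform $\Lambda_\alpha$ bounds on $\partial^k f_t$, but those bounds are exactly what is left unproved. If your goal is to keep the proposal at the same level of rigor as the paper, the cleanest fix is to do what the paper does and cite Triebel or Krantz for $(b)\Rightarrow(a)$ rather than sketch a smoothing argument whose key identity is unjustified.
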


\begin{proof}
The equivalence between $a)$ and $b)$ is well known and can be found for instance in \cite[page 202]{Tr} or in \cite[Theorem 6.1]{Kr}. The equivalence between $a)$ and $c)$ can be found in \cite[page 13]{JW}.\\

Condition $d)$ implies $b)$ because given a positive integer $\ell$, the measure $\sigma (\ell)$ defined as
$$
\sigma (\ell) = \sum_{j=0}^\ell (-1)^{\ell+j} {{\ell}\choose{j}} \delta_{j}
$$
satisfies 
$$
\int_{{\R}^d } f(x+hw) d\sigma (\ell) (w) = \Delta_{\ell}f(x,h), \quad x\in {\R}^d, \ h>0 .
$$
Moreover if $\ell >[m +\alpha]$, one can check that the first $[m + \alpha]$ moments of  $\sigma (\ell)$ vanish. \\

Now let us see that $c)$ implies $d)$. Let $\sigma$ be a finite measure in ${\R}^d$ whose support is contained in the ball $\{x \in {\R}^d : |x|\leq M \}$. Fix $x \in {\R}^d$, $h>0$ and apply $c)$ to the ball $B$ centered at $x$ of radius $Mh$ to obtain a polynomial $P_B$ of degree smaller or equal to $[m + \alpha]$ such that 
$$\sup_B |f - P_B| \leq K_1 (Mh)^{m + \alpha}.$$
Since the first $[m+ \alpha]$ moments of $\sigma$ vanish, we deduce that
$$
| \Delta_{\sigma} f (x,h) |
	=\Big|\int_{{\R}^d } \Big[ f(x+hw) - P_B (x+hw) \Big] d\sigma (w) \Big| 
    < K_1 (Mh)^{m+\alpha} \| \sigma \|.
$$
\end{proof}

The next three auxiliary results are needed to approximate $\Theta_{\varepsilon}^{\sigma}$ by a dyadic martingale with bounded increments. 

\begin{lem}\label{lemmacrutial1}
Let $m \geq 0$ be an integer, $0< \alpha < 1$ and $f \in C^{m,\alpha}({\R}^d) $. Let $\sigma $ be a compactly supported (signed) Borel measure on $\R^d$ with finite total variation such that \eqref{momentos} holds for all multiindex $k$ with $0 \leq |k| \leq m$. Then:  
\begin{itemize}
 \item[$a)$] There exists a constant $C_1 = C_1 (m,d, \sigma)  >0$ independent of $f$ such that for any $h>0$ and any pair of points $x,t \in {\R}^d$, one has
 $$
 |\Delta_{\sigma} f (x,h) - \Delta_{\sigma} f (t,h)| \leq C_1 \|f\|_{m+\alpha}   |x-t|^{ \alpha}  h^{m}  \, .
 $$
 
\item[$b)$] There exists a constant $C_2= C_2 (m,d, \sigma) >0$ independent of $f$, such that for any multiindex $\kappa$ with $|\kappa|=1$, any $h>0$ and any pair of points $x,t \in {\R}^d$, one has
$$
\Big|\int_{{\R}^d } w^\kappa \Big[ f(x+hw^\kappa) - f(t+hw^\kappa) \Big] d\sigma (w) \Big| 
\leq C_2 \|f\|_{m, \alpha} \,  \, |x-t|^{\alpha} h^{m} .
$$

\item[$c)$] There exists a constant $C_3 = C_3 (m,d, \sigma) >0$ independent of $f$, such that for any cube $Q$ of ${\R}^d$ and any $0<h< \ell(Q)/2$, one has
$$
\Big|\int_Q \Delta_{\sigma} f (x,h) dm_d (x) \Big| \leq C_3 \|f\|_{m, \alpha} \,  \ell(Q)^{d + \alpha -1}  \, h^{m + 1}.
$$
\end{itemize}
\end{lem}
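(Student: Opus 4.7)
All three parts can be attacked with a common template: Taylor-expand the $w$-dependent integrand about $w=0$, use the moment hypothesis on $\sigma$ (or on a modification of it) to discard the polynomial part, and bound the integral remainder by the $\Lambda_{\alpha}$-seminorm of the top-order derivatives of $f$. Throughout, let $M>0$ be such that $\mathrm{supp}(\sigma)\subset\{|w|\le M\}$.

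For $(a)$, I would set $F(w):=f(x+hw)-f(t+hw)$. Then $F\in C^{m}$ in $w$ with $\partial^{l}F(w)=h^{|l|}[\partial^{l}f(x+hw)-\partial^{l}f(t+hw)]$. The plan is to apply the integral form of Taylor's formula at order $m$ to write
\[
F(w)=\sum_{|l|\le m-1}\frac{\partial^{l}F(0)}{l!}w^{l}+\sum_{|l|=m}\frac{m}{l!}w^{l}\int_{0}^{1}(1-s)^{m-1}\partial^{l}F(sw)\,ds.
\]
The first sum is a polynomial of degree $\le m-1$ and is annihilated by $\sigma$ by the moment hypothesis. Because $\partial^{l}f\in\Lambda_{\alpha}$ for $|l|=m$, the remainder integrand is $O(\|f\|_{m,\alpha}h^{m}|x-t|^{\alpha}|w|^{m})$, which when integrated against $|\sigma|$ on $\{|w|\le M\}$ produces the claimed bound.

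Part $(b)$ should follow from the very same argument once $\sigma$ is replaced by the signed measure $d\tau(w):=w^{\kappa}\,d\sigma(w)$. Since $\int w^{\ell}\,d\tau=\int w^{\ell+\kappa}\,d\sigma$ and $|\ell+\kappa|=|\ell|+1$, the measure $\tau$ inherits vanishing moments up to order $m-1$, which is exactly what is needed to eliminate the degree $\le m-1$ Taylor polynomial of $F$. The remainder estimate is identical to that of $(a)$; only the final integration changes, picking up an extra factor $\int|w^{\kappa}|\,d|\sigma|\le M\|\sigma\|$.

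For $(c)$, the plan is to transfer the spatial average onto $\sigma$ and exploit that the resulting function of $w$ is one order smoother than $f$ itself. Setting $G(w):=\int_{Q}f(x+hw)\,dm_{d}(x)$, Fubini yields $\int_{Q}\Delta_{\sigma}f(x,h)\,dm_{d}(x)=\int G(w)\,d\sigma(w)$. For $|l|\le m$ one has $\partial^{l}G(w)=h^{|l|}\int_{Q}\partial^{l}f(x+hw)\,dx$ by differentiation under the integral, but I claim $G$ admits one more classical derivative: after changing variables to $u=x+hw$ so that the domain becomes $Q+hw$, differentiation of the $d$-fold integral with moving bounds (equivalently, the divergence theorem) gives, for $|l|=m$,
\[
\partial_{w_{j}}\partial^{l}G(w)=h^{m+1}\Big[\int_{\mathrm{face}_{j}^{+}(Q+hw)}\partial^{l}f\,dS-\int_{\mathrm{face}_{j}^{-}(Q+hw)}\partial^{l}f\,dS\Big].
\]
Pairing antipodal points on the two parallel faces, which are separated by $\ell(Q)$ along $e_{j}$, and using $\partial^{l}f\in\Lambda_{\alpha}$, this derivative is bounded by $C\|f\|_{m,\alpha}h^{m+1}\ell(Q)^{d-1+\alpha}$. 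A Taylor expansion of $G$ at $0$ to degree $m$ — whose polynomial part is killed by $\sigma$ — then produces a remainder of size $O(\|f\|_{m,\alpha}h^{m+1}\ell(Q)^{d-1+\alpha}|w|^{m+1})$, which integrated against $|\sigma|$ yields the claim. The main obstacle is precisely this gain of an extra derivative of $G$ via the divergence theorem: a naive use of the pointwise estimate $|\Delta_{\sigma}f|\lesssim h^{m+\alpha}$ from Lemma \ref{lemmawellknown} would give only $\ell(Q)^{d}h^{m+\alpha}$, which is strictly worse than the asserted $\ell(Q)^{d-1+\alpha}h^{m+1}$ as soon as $h<\ell(Q)$.
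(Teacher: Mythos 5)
Your parts $(a)$ and $(b)$ follow the paper's argument essentially verbatim: Taylor-expand in the $h$-variable, discard the polynomial part using the moment hypothesis, and bound the integral remainder by $\|f\|_{m,\alpha}\,|x-t|^{\alpha}$. Re-packaging $(a)$ around $F(w):=f(x+hw)-f(t+hw)$ rather than expanding $f(x+hw)$ and $f(t+hw)$ separately is cosmetic. For $(b)$, your observation that $d\tau=w^{\kappa}d\sigma$ inherits vanishing moments up to order $m-1$ is precisely the cancellation the paper exploits, though you should keep in mind that the displacement in $(b)$ is one-dimensional ($x+hw^{\kappa}e_{\kappa}$, not $x+hw$), so the relevant expansion is of a single scalar variable; this changes nothing in the bound.

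Part $(c)$ is where your proof genuinely diverges from the paper's, and both routes are correct. The paper first telescopes the averaged difference $\int_{Q}[f(x+hw)-f(x)]\,dx$ across the $d$ coordinate directions, then slices each term by the mid-hyperplane of a shifted cube to rewrite it as an inner integral of a two-point difference separated by $\ell(Q)$, and finally invokes part $(b)$. You instead set $G(w):=\int_{Q}f(x+hw)\,dx$, observe via the change of variables $u=x+hw$ and the moving-boundary (Leibniz/divergence) formula that $G$ is one degree smoother than $f$ — with $\partial^{l}G$ for $|l|=m+1$ expressed as a face-integral difference over opposite faces of $Q+hw$, which the H\"older continuity of $\partial^{l'}f$ bounds by $h^{m+1}\ell(Q)^{d-1+\alpha}$ — and then run the same Taylor-plus-moment-cancellation template (now to degree $m$, killed by the order-$m$ moment condition) directly on $G$. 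Both arguments encode the identical observation that averaging over the cube in each coordinate direction turns a pointwise oscillation estimate into a boundary (telescoping) estimate, thereby gaining a power of $h$ at the cost of a power of $\ell(Q)$; the paper realizes it by a coordinate-by-coordinate rewrite that routes through $(b)$, while you realize it in a single Taylor expansion that makes $(c)$ logically independent of $(b)$. Your route is arguably cleaner because all three parts then follow a uniform scheme; the small price is that one must verify $G\in C^{m+1}$, which does hold since $\partial^{l'}f$ is continuous for $|l'|=m$ and the face integrals vary continuously in $w$. One cosmetic slip: the final factor you pick up is $\int |w^{\kappa}|^{m+1}d|\sigma|\le M^{m+1}\|\sigma\|$ (and similarly $M^{m+1}$ in $(c)$), not $M\|\sigma\|$, but this only affects the constant.
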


\begin{proof}
To prove part $a)$, write Taylor's formula,
$$
f(x+hw)= \sum_{|j| \leq m-1} \frac{{\partial}^j f (x)}{j!} (hw)^j + m \sum_{|j|=m} \frac{(hw)^j}{j!} \int_0^1 (1-s)^{m-1} {\partial}^j f (x+shw) ds \, . 
$$
Since the first $m$ moments of the measure $\sigma$ vanish, we deduce
\begin{align}\label{eq:moment}
 & \int_{{\R}^d }  \Big[ f(x+hw) - f(t+hw) \Big] d\sigma (w) \nonumber
 \\
 & \qquad  =   m \sum_{|j|=m} \int_{{\R}^d }   \frac{(hw)^j}{j!} \int_0^1 (1-s)^{m-1} \Big[{\partial}^j f (x+shw) - {\partial}^j f (t+shw)\Big] ds d \sigma (w).
\end{align}
Since  $|{\partial}^j f (x+shw) - {\partial}^j f (t+shw)| \leq \|f\|_{m , \alpha} |x-t|^{\alpha},$ the estimate in $a)$ follows. \\

The proof of part $b)$ is similar. Write Taylor's formula, 
$$
f(x+hw)= \sum_{|j| \leq m-1} \frac{{\partial}^j f (x)}{j!} (hw)^j + m \sum_{|j|=m} \frac{(hw)^j}{j!} \int_0^1 (1-s)^{m-1} {\partial}^j f (x+shw) ds \, . 
$$
Since the first $m$ moments of the measure $\sigma$ vanish, we deduce
\begin{align}\label{eq:moment}
 & \int_{{\R}^d } w^\kappa \Big[ f(x+hw^\kappa) - f(t+hw^\kappa) \Big] d\sigma (w) \nonumber
 \\
 & \qquad  =   m \sum_{|j|=m} \int_{{\R}^d } w^\kappa  \frac{(hw^\kappa)^j}{j!} \int_0^1 (1-s)^{m-1} \Big[{\partial}^j f (x+shw^\kappa) - {\partial}^j f (t+shw^\kappa)\Big] ds d \sigma (w).
\end{align}
Since  
$|{\partial}^j f (x+shw^\kappa) - {\partial}^j f (t+shw^\kappa)| \leq   \|f\|_{m , \alpha} |x-t|^{\alpha},$ the estimate in $b)$ follows. \\

Let us now prove part $c)$. One can assume that the support of $\sigma$ is contained in the unit ball. Using that $\sigma ({\R}^d) =0$ and Fubini's theorem, we have
$$
\int_Q \Delta_{\sigma} f (x,h) dm_d (x) = \int_{{\R}^d} \int_Q \Big[f(x+hw) - f(x) \Big] d m_d (x) d \sigma (w).
$$
Fix $w=(w_1, \ldots , w_d) \in {\R}^d$, write $\widetilde{w}^j = \sum_{i=1}^j w_i e_i $, $j=1, \ldots , d$ and $\widetilde{w}^0 = (0, \ldots , 0)$. Here $\{e_j : j=1, \ldots , d \}$ is the canonical basis of ${\R}^d$. Then
$$
\int_Q \Delta_{\sigma} f (x,h) dm_d (x) = \sum_{j=1}^d A_j (h) \, ,
$$
where
$$
A_j (h) := \int_{{\R}^d} \int_Q \Big[f(x+h\widetilde{w}^j) - f(x+ h \widetilde{w}^{j-1}) \Big] d m_d (x) d \sigma (w) \, .
$$
Since $ h < \ell(Q) / 2$, a cancellation occurs in the inner integral. Actually consider the cube $Q' = Q + h \widetilde{w}^{j-1}$ and let $Q^{*}$ be the cube in ${\R}^{d-1}$ obtained as intersection of $Q'$ with the hyperplane orthogonal to $e_j$ containing the center of $Q'$, we have 
\begin{align*}
& \int_Q \Big[f(x+h\widetilde{w}^j) - f(x+ h \widetilde{w}^{j-1}) \Big] d m_d (x) \\
& \qquad =   \int_{Q'} \Big[f(x+hw_j e_j) - f(x)\Big] \, d m_d (x) \\
& \qquad = \int_{Q^*} \int_0^1 \Big[ f(x^* + u_j + hw_j s e_j ) - f(x^* - u_j + hw_j s e_j ) \Big] h w_j \, ds \, d m_{d-1} (x^*),
\end{align*}
where $\displaystyle u_j :=  \frac{\ell(Q)}{2} \, e_j $. Hence, 
$$
A_j (h) 
= \int_{Q^*} h \int_0^1 \int_{{\R}^d} w_j  \Big[f(x^* + u_j + hw_j s e_j) - f(x^* - u_j + hw_j s e_j)\Big] d \sigma (w) \, ds \,  d m_{d-1} (x^*) . 
$$
Applying the estimate in $b)$ in the inner integral we deduce that 
$$|A_j (h)| \leq  C_3 \|f\|_{m , \alpha} \|\sigma \| \ell(Q)^{d + \alpha -1}  h^{m + 1}.$$ 
\end{proof}

In the case $\alpha = 1$ we need a slight variation of the previous result whose proof uses the following technical statement. 

\begin{lem}\label{lemmatechnical}
There exists a constant $C=C(d) >0$ only depending on the dimension such that for any function $f \in \Lambda_1 (\R^d)$ and any points $x,t,w \in \R^d$ satisfying $|x-t| > |w|/2$, one has
$$
| f(x+w) - f(x) - (f(t+w) - f(t)) | \leq C \|f \|_{\Lambda_1} |w| \log (1 + |x-t|/ |w|) . 
$$
\end{lem}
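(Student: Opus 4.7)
The plan is to write the mixed second difference $G(x,t,w) := f(x+w) - f(x) - f(t+w) + f(t)$ and compare $G$ at the input scale $|w|$ to $G$ at a larger scale $|W| \asymp |x-t|$, where the quantity is easy to estimate directly; a logarithmic factor is the exact cost of bridging the two scales. The starting point is a doubling identity: applying the Zygmund condition at $x+w$ and at $t+w$ with increment $w$ gives $|f(x+2w)+f(x)-2f(x+w)| \leq \|f\|_{\Lambda_1}|w|$ and the analogous inequality with $t$. Substituting these two identities into the definition of $G(x,t,2w)$ and cancelling the unwanted terms yields
$$G(x,t,2w) = 2G(x,t,w) + \mathcal{E}, \qquad |\mathcal{E}| \leq 2\|f\|_{\Lambda_1}|w|.$$
Equivalently, the normalized quantity $G(x,t,W)/|W|$ is shifted by at most $\|f\|_{\Lambda_1}$ each time $W$ is doubled.

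Iterating this doubling identity $N$ times with $W_k := 2^k w$ and telescoping would give
$$\bigl|\,G(x,t,W_N)/|W_N|\,-\,G(x,t,w)/|w|\,\bigr| \leq N\|f\|_{\Lambda_1}.$$
The natural choice is to take $N$ to be the smallest nonnegative integer with $2^N|w| \geq |x-t|$, so that $|W_N| \in [|x-t|,\,2|x-t|]$. Under the standing hypothesis $|x-t| > |w|/2$ one then has $N \leq C\log(1 + |x-t|/|w|)$, and matters reduce to bounding $G(x,t,W)$ in the base case $|W| \asymp |x-t|$.

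The base case is handled by a single application of the Zygmund condition at the \emph{common} midpoint $m := (x+t+W)/2$, using the decomposition
$$G(x,t,W) = \bigl[f(x+W)+f(t)-2f(m)\bigr] \;-\; \bigl[f(x)+f(t+W)-2f(m)\bigr].$$
Indeed, $m$ is equidistant from $x+W$ and $t$, with half-increment $(x-t+W)/2$, and similarly equidistant from $x$ and $t+W$, with half-increment $(x-t-W)/2$. Each bracket is thus bounded by $\|f\|_{\Lambda_1}$ times the corresponding half-length, so $|G(x,t,W)| \leq \tfrac{1}{2}\|f\|_{\Lambda_1}(|x-t+W|+|x-t-W|) \leq \|f\|_{\Lambda_1}(|x-t|+|W|) \lesssim \|f\|_{\Lambda_1}|W|$. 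Combining this with the telescoping estimate and multiplying back by $|w|$ delivers
$$|G(x,t,w)| \leq C\|f\|_{\Lambda_1}\,|w|\,\bigl(1+N\bigr) \leq C'\|f\|_{\Lambda_1}\,|w|\,\log(1 + |x-t|/|w|).$$

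The only delicate point is the book-keeping in the iteration: the per-step error in $G(x,t,W_k)$ doubles with $k$, but after dividing by $|W_k|$ the per-step error in $G/|W|$ is uniformly bounded by $\|f\|_{\Lambda_1}$, which is exactly what makes the total error grow linearly in $N$ and hence only logarithmically in $|x-t|/|w|$. Everything else is triangle inequality and two direct invocations of the defining Zygmund estimate.
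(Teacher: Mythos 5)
Your proof is correct, and it takes a genuinely different and more elementary route than the paper's. The paper's argument passes to the harmonic extension $u$ of $f$ to $\R^{d+1}_+$: it uses the approximation $|f(x+w)-f(x)-\langle \nabla_x u(x,|w|),w\rangle|\lesssim \|f\|_{\Lambda_1}|w|$ (Proposition~2.3 of \cite{DLlN}), the fact that $\nabla_x u$ is a Bloch function (\cite{S}), and the estimate that the hyperbolic distance between $(x,|w|)$ and $(t,|w|)$ is $O\bigl(\log(1+|x-t|/|w|)\bigr)$. You instead work entirely on the real-variable side: the Zygmund condition applied at $x+w$ and $t+w$ gives the near-homogeneity $|G(x,t,2w)-2G(x,t,w)|\le 2\|f\|_{\Lambda_1}|w|$ of the mixed second difference, so the normalized quantity $G(x,t,W)/|W|$ drifts by at most $\|f\|_{\Lambda_1}$ per doubling; after $N\lesssim\log(1+|x-t|/|w|)$ doublings one reaches the scale $|W|\asymp|x-t|$, where the common-midpoint decomposition $G(x,t,W)=[f(x+W)+f(t)-2f(m)]-[f(x)+f(t+W)-2f(m)]$ with $m=(x+t+W)/2$ gives $|G(x,t,W)|\le\|f\|_{\Lambda_1}(|x-t|+|W|)\lesssim\|f\|_{\Lambda_1}|W|$. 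The bookkeeping (dividing by $|W_k|$ so the per-step error is uniform) and the use of the hypothesis $|x-t|>|w|/2$ (to keep $\log(1+|x-t|/|w|)$ bounded below when $N$ is $0$ or $1$) are both handled correctly. Your version is self-contained, needing nothing beyond the Zygmund inequality itself, while the paper's version leans on harmonic-extension and Bloch machinery but makes the logarithm conceptually transparent as a hyperbolic distance.
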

\begin{proof}
We can assume that $f$ has compact support. 
Let $u$ be the harmonic extension of $f$ to the upper half space $\R^{d+1}_+ = \{(x,y) : x \in \R^d , y>0\}$. It is well known that there exists a constant $C_1 >0$ such that for any $x, w \in \R^d$ one has
$$
| f(x+w) - f(x) -  \langle \nabla_x u (x,|w|) ,  w \rangle| \leq C_1 \|f \|_{\Lambda_1} |w| . $$
See for instance Proposition 2.3 of \cite{DLlN} . It is also well known that the gradient of $u$ is in the Bloch space and actually there exists a constant $C_2 = C_2 (d) >0$ such that $\sup \{ y |{\partial}^k u (x,y)| : x \in \R^d , y>0 \} < C_2 \|f \|_{\Lambda_1} $ for any multiindex $k$ with $|k|=2$. See \cite{S}, pag. 145. Thus 
$$
|\nabla_x u (x,|w|) - \nabla_x u (t,|w|) | < C_2 \|f\|_{\Lambda_1} \inf \int_{\Gamma} \frac{ds}{y}, 
$$
where the infimum is taken over all rectifiable curves $\Gamma$ in $\R^{d+1}_+$ joining the points $(x,|w|)$ and $(t,|w|)$. The Lemma follows from the estimate
$$
 \inf_{\Gamma} \int_{\Gamma} \frac{ds}{y} \leq C \log (1+ |x-t|/ |w|). 
$$

\end{proof}

We now state the analogue of Lemma \ref{lemmacrutial1} in the case $\alpha=1$.

\begin{lem}\label{lemmacrutial2}
Let $m \geq 0$ be an integer and $f \in C^{m,1}({\R}^d) $. Let $\sigma $ be a compactly supported (signed) Borel measure on $\R^d$ with finite total variation  such that \eqref{momentos} holds for all $k$ with $0 \leq |k| \leq m+1$. Then:  
\begin{itemize}
\item[$a)$] There exists a constant $C_1 = C_1 (m,d, \sigma) >0$ independent of $f$, such that for any $h>0$ and any pair of points $x,t \in {\R}^d$ such that $|x-t|> h/2$, one has
$$ |\Delta_{\sigma} f (x,h) - \Delta_{\sigma} f (t,h)|
 \leq C_1 \|f\|_{m,1}  \log \Big(\frac{|x-t|}{h} + 1\Big) \, h^{m+1}.
$$

\item[$b)$] There exists a constant $C_2 = C_2 (m,d, \sigma) >0$ independent of $f$, such that for any multiindex $\kappa$ with $|\kappa|=1$, any $h>0$ and any pair of points $x,t \in {\R}^d$ such that $|x-t|> h/2$, one has
$$ \qquad \qquad
\Big|\int_{{\R}^d } w^\kappa \Big[ f(x+hw^\kappa) - f(t+hw^\kappa) \Big] d\sigma (w) \Big| \leq C_2 \|f\|_{m,1}  \log \Big(\frac{|x-t|}{h} + 1\Big) \, h^{m+1}.
$$

\item[$c)$] There exists a constant $C_3 = C_3 (m,d, \sigma) >0$ independent of $f$, such that for any cube $Q$ of ${\R}^d$ and any $0<h< \ell(Q)/2$, one has
$$
\Big|\int_Q \Delta_{\sigma} f (x,h) dm_d (x) \Big | \leq C_3 \|f\|_{m,1}   \ell(Q)^{d  -1} \log \Big(\frac{\ell(Q)}{h}\Big) \, h^{m + 2}.
$$
\end{itemize}
\end{lem}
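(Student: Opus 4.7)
My plan is to follow the skeleton of the proof of Lemma \ref{lemmacrutial1} in parallel, using Lemma \ref{lemmatechnical} in place of the Hölder-type modulus estimate for $\partial^j f$, and exploiting the extra vanishing moment of order $m+1$ that the hypothesis now provides. After a harmless rescaling absorbed into the constants I may assume that the support of $\sigma$ lies in the unit ball, so that $|shw|\leq h$ for $s\in(0,1)$ and $w$ in the support.

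For part $a)$, I first rewrite $\Delta_\sigma f(x,h)-\Delta_\sigma f(t,h)$ via the same Taylor expansion to order $m-1$ with integral remainder that appears in \eqref{eq:moment}. The moment cancellation of $\sigma$ up to order $m$ annihilates the polynomial terms in $w$ of degree $\leq m-1$, leaving
$$
m\sum_{|j|=m}\frac{1}{j!}\int_{\R^d}(hw)^j\int_0^1 (1-s)^{m-1}\bigl[\partial^j f(x+shw)-\partial^j f(t+shw)\bigr]\,ds\,d\sigma(w).
$$
Here is where I invoke the extra moment: for $|j|=m$ the factor $(hw)^j$ is a monomial of degree exactly $m$ in $w$, so multiplying it by the constant $\partial^j f(x)-\partial^j f(t)$ and integrating against $d\sigma(w)$ still yields zero. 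Subtracting this, the inner bracket is replaced by the second-difference expression $[\partial^j f(x+shw)-\partial^j f(x)]-[\partial^j f(t+shw)-\partial^j f(t)]$, and since $|x-t|>h/2\geq |shw|/2$, Lemma \ref{lemmatechnical} bounds it by $C\|f\|_{m,1}|shw|\log(1+|x-t|/|shw|)$. To conclude I use the elementary inequality $\log(1+ab)\leq\log(1+a)+\log(1+b)$ to split the logarithm,
$$
\log\!\Bigl(1+\frac{|x-t|}{|shw|}\Bigr)\leq\log\!\Bigl(1+\frac{|x-t|}{h}\Bigr)+\log\!\Bigl(1+\frac{1}{s|w|}\Bigr).
$$
The first term passes through the $s$ and $w$ integrals producing the desired factor $h^{m+1}\log(1+|x-t|/h)$; the second is absorbed into a bounded constant, since the weight $s^{m+1}|w|^{m+1}\log(1+1/(s|w|))$ is integrable on $(0,1)\times\{|w|\leq 1\}$ against $ds\,d|\sigma|(w)$. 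Because $|x-t|>h/2$ forces $\log(1+|x-t|/h)\geq\log(3/2)$, the remaining constant is further absorbed into the log factor.

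Part $b)$ is handled by an identical argument: the extra factor $w^\kappa$ raises the degree of $(hw)^j w^\kappa$ from $m$ to $m+1$, so the constant-subtraction step uses precisely the $(m+1)$-th moment cancellation of $\sigma$, which is the reason this moment is required in the hypothesis. Part $c)$ is then routine: I follow verbatim the axis-by-axis telescoping decomposition of the proof of Lemma \ref{lemmacrutial1}(c), expressing $\int_Q\Delta_\sigma f(x,h)\,dm_d(x)$ as a sum of $d$ terms $A_j(h)$, each an integral over a $(d-1)$-dimensional slice $Q^*$ of a face-to-face difference across a translate of $Q$. Applying $b)$ with $|x-t|$ of order $\ell(Q)$ and scale $sh<\ell(Q)/2$, then integrating $s^{m+1}\log(\ell(Q)/(sh)+1)$ over $s\in(0,1)$ and over $Q^*$, yields the stated bound $\ell(Q)^{d-1}\log(\ell(Q)/h)\,h^{m+2}$.

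The principal obstacle is the clean handling of the logarithmic factor produced by Lemma \ref{lemmatechnical}: the singularity of $\log(1+|x-t|/|shw|)$ as $s|w|\to 0$ is what forces the splitting above, and this is the only place in the argument where the $(m+1)$-moment cancellation must be exploited rather than simply formally recorded.
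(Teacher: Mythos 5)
Your proposal is correct and follows essentially the same route as the paper: Taylor expansion to order $m$ with moment cancellation of $\sigma$, then subtraction of the constant $\partial^j f(\cdot)$ term enabled by the additional vanishing moment so that Lemma \ref{lemmatechnical} can be applied to the resulting second difference, and finally, for part $c)$, the same axis-by-axis telescoping reduction as in Lemma \ref{lemmacrutial1}. The only addition is that you make explicit the final integral estimate in $s$ via the splitting $\log(1+ab)\leq\log(1+a)+\log(1+b)$ and the absorption using $|x-t|>h/2$, a step the paper leaves implicit.
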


\begin{proof}

Arguing as in the proof of part $a)$ of Lemma \ref{lemmacrutial1} we see that identity \eqref{eq:moment} holds.
Moreover, since the $m+1$ moments of the measure $\sigma$ also vanish, we can replace $\displaystyle {\partial}^j f (x+shw^\kappa)$ (respectively $ \displaystyle {\partial}^j f (t+shw^\kappa)$) by $\displaystyle {\partial}^j f (x+shw^\kappa) - {\partial}^j f (x)$ (respectively $\displaystyle {\partial}^j f (t+shw^\kappa) -  {\partial}^j f (t)$).
By  Lemma \ref{lemmatechnical} there exists a universal constant $C>0$ such that if $|x-t| > h/2$ and $0<s<1$, one has  
$$
\Big|{\partial}^j f (x+shw^\kappa) - {\partial}^j f (x) - \Big[ {\partial}^j f (t+shw^\kappa) - {\partial}^j f (t) \Big]\Big| 
< C h s \|f \|_{m+1} \log \Big(\frac{|x-t|}{hs} + 1\Big)   .
$$
Then 
\begin{align*}
& \Big|\int_{{\R}^d } w^\kappa \Big[ f(x+hw^\kappa) - f(t+hw^\kappa) \Big] d\sigma (w)\Big| \\
& \qquad \qquad \leq C(m,d) h^{m+1}  \|f \|_{m+1} \int_0^1 s (1-s)^{m-1} \log \Big(\frac{|x-t|}{hs} + 1\Big) ds,
\end{align*}
and estimate $a)$ follows. \\

The proof of parts $b)$ and $c)$ proceeds as in part $b)$ of Lemma \ref{lemmacrutial1}. 
\end{proof}

\section{Reduction to the martingale setting and proof of Theorem \ref{main}.}\label{sect3}
Let $Q_0 = [0,1)^d$ be the unit cube in $\R^d$. Since the problem under consideration is local, we will restrict ourselves to the study of the quantities $\Theta_{\varepsilon}^{\sigma}f(x)$ for $x\in Q_0$ and $f \in C^{m, \alpha}(\R^d )$. 
We will see in this section that it is possible to construct a dyadic martingale $\{ S_n \}$ in $Q_0$ so that the asymptotic behavior of $\Theta_{\varepsilon}^{\sigma}f$ as $\varepsilon \to 0$,  can be transferred to the asymptotic behavior of  $\{ S_n \}$.\\

Denote by $\mathcal {D}_{n}$ the family of all dyadic cubes of $Q_0$ of the generation $n$, that is those 
$$
Q := I_1 \times I_2 \times ...\times I_d,
$$
where $I_j = [m_j 2^{-n}, (m_j +1)2^{-n})$, $m_j \in \{0, 1, ..., 2^n -1 \} $  and  $j = 1,..., d$. The family $\displaystyle \{ \mathcal {D}_{n} : n=0,1, \ldots \}$ is called the \emph {dyadic filtration} of $Q_0$. Note that each $Q_{n-1}\in \mathcal{D}_{n-1}$ has a unique decomposition $\displaystyle Q_{n-1} = Q^1_n \cup ...\cup Q^{2^d}_n $ where $Q^j_{n} \in \mathcal{D}_n$ for $j=1,...,2^d$. A sequence $\{S_n \}$ of functions $S_n : Q_0 \to \R$ is called a \emph{dyadic martingale} if it verifies the following two conditions: 
\begin{enumerate}
\item Every $S_n$ is constant in each $Q_{n}\in \mathcal{D}_n$, for all $n\geq 0$.
\item For every $n\geq 1$ and each $Q_{n-1}\in \mathcal{D}_{n-1}$, one has 
$$
\fint_{Q_{n-1}} \hspace{-0.2cm}S_{n-1}(x) \, dm_d (x)  = \fint_{Q_{n-1}} \hspace{-0.2cm}S_n (x) \, dm_d (x)  .
$$
\end{enumerate}

We say that the dyadic martingale $\{ S_n \}$ has \emph{uniformly bounded increments} if 
$$ 
\|S\|_{\mathcal{B}} := \sup_n \|S_n - S_{n-1}\|_{\infty}  < \infty .
$$
Observe that if $\{ S_n \}$ is a dyadic martingale with uniformly bounded increments and $S_0 = 0$ then  we have the trivial global bound $\displaystyle \|S_n \|_{\infty} \leq Cn $. However, such trivial bound can be substantially improved for a.e. $x\in Q_0$ according to  the \emph{Law of the Iterated Logarithm}. Indeed, if $\{S_n \}$ is a dyadic martingale in $Q_0$ with uniformly bounded increments and $S_0 = 0$ then there exists $C>0$ depending only on $d$ and $\|S\|_{\mathcal{B}}$ such that 
\begin{equation}\label{lil}
\limsup_{n\to \infty} \frac{|S_n (x)|}{\sqrt{n \log \log n}} \leq C ,
\end{equation}
for a.e. $x\in Q_0$. See \cite{St} for history and an account of the Law of the Iterated Logarithm in different contexts.  

\

\begin{lem}\label{lemmaimproper} 
Let $m\geq 0$ be an integer,  $0< \alpha \leq 1$ and $f\in C^{m, \alpha}(\R^d)$. Let $\sigma$ be a compactly supported (signed) Borel measure on $\R^d$ with finite total variation satisfying \eqref{momentos} for any multiindex $k$ such that $0 \leq |k| \leq [m+\alpha]$. Then, for any cube $Q\subset Q_0$, the following integral
\begin{equation}\label{defmartingale}
S_Q := \int_{0}^1 \fint_{Q} \Delta_{\sigma}f(x,h) \, dm_d (x) \, \frac{dh}{h^{m + \alpha +1}}
\end{equation}
converges. 
\end{lem}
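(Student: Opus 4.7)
The plan is to split the integral at the scale $h_0 = \ell(Q)/2$ and to estimate the two resulting pieces separately, using two different bounds on $\Delta_\sigma f(x,h)$.

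For the outer piece $h \in [h_0, 1]$, I would use the pointwise bound from Lemma \ref{lemmawellknown}$(d)$, namely $|\Delta_\sigma f(x,h)| \leq K_2 h^{m+\alpha}$, valid under hypothesis \eqref{momentos}. This makes the absolute value of the averaged integrand bounded by $K_2 h^{-1}$, whose integral over $[h_0,1]$ is $K_2 \log(2/\ell(Q))$ and in particular finite.

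For the inner piece $h \in (0,h_0]$ the pointwise bound $h^{m+\alpha}$ gives only a non-integrable $h^{-1}$ contribution, so it is essential to exploit the cancellation produced by averaging over $Q$. This is exactly what is provided by part $c)$ of the crucial auxiliary lemmas. In the case $0<\alpha<1$, Lemma \ref{lemmacrutial1}$(c)$ gives
$$
\left| \fint_Q \Delta_\sigma f(x,h)\, dm_d(x) \right| \leq C_3 \|f\|_{m,\alpha} \ell(Q)^{\alpha-1} h^{m+1}
$$
whenever $h < \ell(Q)/2$, so the inner integrand is dominated by $C_3 \|f\|_{m,\alpha} \ell(Q)^{\alpha-1} h^{-\alpha}$, which is integrable at $0$ since $\alpha<1$. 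In the case $\alpha=1$, Lemma \ref{lemmacrutial2}$(c)$ gives
$$
\left| \fint_Q \Delta_\sigma f(x,h)\, dm_d(x) \right| \leq C_3 \|f\|_{m,1}\, \ell(Q)^{-1} \log(\ell(Q)/h)\, h^{m+2},
$$
which after dividing by $h^{m+2}$ is integrable near $h=0$ thanks to the logarithmic (hence integrable) singularity.

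The only subtlety is to verify that the hypotheses of Lemmas \ref{lemmacrutial1}$(c)$ and \ref{lemmacrutial2}$(c)$ are met: they require the vanishing of the moments of $\sigma$ up to order $m$ (respectively $m+1$), both of which are consequences of the assumption \eqref{momentos} since $[m+\alpha] \geq m$ when $0<\alpha<1$ and $[m+\alpha]=m+1$ when $\alpha=1$. Combining the two estimates, the integrand defining $S_Q$ is absolutely integrable on $(0,1]$, which proves convergence.
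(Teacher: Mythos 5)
Your proof is correct and follows essentially the same route as the paper: near $h=0$ you invoke part $c)$ of Lemmas~\ref{lemmacrutial1} and \ref{lemmacrutial2} to obtain the averaged bound \eqref{estDelta} (giving an $h^{-\alpha}$, resp.\ logarithmic, integrable singularity), and away from $0$ the trivial pointwise bound $|\Delta_\sigma f(x,h)|\lesssim h^{m+\alpha}$ suffices. The only difference is presentational: the paper leaves the piece $h\ge \ell(Q)/2$ and the moment-order bookkeeping implicit, while you spell them out.
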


\begin{proof}
Observe that by part c) of Lemmas \ref{lemmacrutial1} and \ref{lemmacrutial2}, we have for $0<h<\ell(Q)/2$,  
\begin{equation}\label{estDelta}
\Big | \fint_{Q} \Delta_{\sigma}f(x,h) \, dm_d (x) \Big |  \leq \begin{cases} C \ell(Q)^{\alpha -1} h^{m+1} \, \, \,  & ,  \, \, 0< \alpha < 1 , \vspace{0.2cm}\\
C  \ell(Q)^{-1}\log \big( \frac{\ell(Q)}{h} \big)  h^{m+2} \, \, \,  & ,  \, \, \alpha = 1 ,
\end{cases}
\end{equation}
where $C = C(m,d, f, \sigma , \alpha ) >0$, so the outer integral is absolutely convergent.
\end{proof}

The key reduction to the martingale setting is provided by the following two lemmas. 
\begin{lem}\label{lemmaincrements}
Let $m\geq 0$ be an integer,  $0< \alpha \leq 1$ and $f\in C^{m, \alpha}(\R^d)$. Assume that $\sigma$ is a compactly supported (signed) Borel measure on $\R^d$ with finite total variation satisfying \eqref{momentos} for any multiindex $k$ such that $0 \leq |k| \leq [m+\alpha]$. Take $Q$, $Q'$ two adjacent subcubes of $Q_0$. Then there exists a constant $C= C(d, m, \alpha, \sigma )>0 $ such that
\begin{equation}\label{increments}
|S_Q  - S_{Q'} | \leq  C \|f\|_{m, \alpha}  .
\end{equation}
\end{lem}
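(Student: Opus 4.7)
Since $Q$ and $Q'$ have the same side length $\ell := \ell(Q)$ and share a face, one has $Q' = Q + \tau$ for some $\tau\in\R^d$ with $|\tau| = \ell$. Translating in the second average immediately gives
$$
S_Q - S_{Q'} = \int_0^1 \fint_Q \bigl[\Delta_\sigma f(x,h) - \Delta_\sigma f(x+\tau,h)\bigr] \, dm_d(x) \, \frac{dh}{h^{m+\alpha+1}}.
$$
The plan is to split the outer integral into three ranges, $h\in(0,\ell/2)$, $h\in[\ell/2,2\ell]$, and $h\in(2\ell,1)$, and to bound each contribution by a constant multiple of $\|f\|_{m,\alpha}$.

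On the small range $(0,\ell/2)$ the cancellation between the two averages is unnecessary. After the triangle inequality I would invoke part $c)$ of Lemma \ref{lemmacrutial1} (if $0<\alpha<1$) or of Lemma \ref{lemmacrutial2} (if $\alpha=1$) on each cube separately, which turns the $h$-integrand into $C\|f\|_{m,\alpha}\ell^{\alpha-1}/h^\alpha$ or $C\|f\|_{m,1}\log(\ell/h)/\ell$ respectively; each is uniformly integrable on $(0,\ell/2)$ after the substitution $u=h/\ell$. On the medium range $[\ell/2,2\ell]$, where $h\sim\ell$, the naive pointwise estimate $|\Delta_\sigma f(x,h)|\leq C\|f\|_{m,\alpha}h^{m+\alpha}$ from part $d)$ of Lemma \ref{lemmawellknown} already suffices: the integrand is dominated by $C/h$ on a range of logarithmic length $\log 4$.

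The genuine content lies in the large range $(2\ell,1)$, where cancellation between $\Delta_\sigma f(x,h)$ and $\Delta_\sigma f(x+\tau,h)$ is essential. For $0<\alpha<1$ this is immediate: part $a)$ of Lemma \ref{lemmacrutial1} gives $|\Delta_\sigma f(x,h)-\Delta_\sigma f(x+\tau,h)|\leq C\|f\|_{m,\alpha}\ell^\alpha h^m$, and the resulting integrand $C\ell^\alpha/h^{\alpha+1}$ integrates over $(2\ell,1)$ to a quantity bounded independently of $\ell$. The hard case is $\alpha=1$ on this range, since $|\tau|=\ell<h/2$ and part $a)$ of Lemma \ref{lemmacrutial2} is therefore unavailable. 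My plan here is to Taylor-expand $f(x+\tau+hw)-f(x+hw)$ in powers of $\tau$ up to order $m$, integrate against $d\sigma(w)$, and exploit the full hypothesis that $\sigma$ has vanishing moments up to order $[m+1]=m+1$. Each leading term $\tau^j\Delta_\sigma(\partial^j f)(x,h)$ with $1\leq|j|\leq m$ is controlled by applying part $d)$ of Lemma \ref{lemmawellknown} to $\partial^j f\in C^{m-|j|,1}$, producing an integrable contribution of order $\ell^{|j|}/h^{|j|+1}$. The Taylor remainder, which involves a \emph{second} difference of $\partial^k f$ for $|k|=m$, is the main technical obstacle: it must be handled through the Zygmund-modulus bound of Lemma \ref{lemmatechnical}, and the spurious $\log(1/\ell)$ factor that a straightforward estimate leaves behind has to be absorbed by using the top-order vanishing moment of $\sigma$ one more time, via an additional Taylor-type subtraction of a linear-in-$w$ polynomial inside the integrand. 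When $m=0$ there are no leading terms and the entire argument collapses to this last step, where the vanishing of the first moment of $\sigma$ is what makes the estimate work.
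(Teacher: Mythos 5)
Your plan is close to the paper's. The paper splits the $h$-integral at $\ell(Q)/2$, uses part $c)$ of Lemma~\ref{lemmacrutial1} (or \ref{lemmacrutial2}) on the near range $(0,\ell/2)$ and part $a)$ on the far range $(\ell/2,1)$; your ranges $(0,\ell/2)$ and $(2\ell,1)$ are handled the same way for $0<\alpha<1$, and your middle range $[\ell/2,2\ell]$ with the trivial bound from Lemma~\ref{lemmawellknown}~$d)$ is just a harmless dissection of the paper's far range. Where you genuinely depart — and what makes the proposal interesting — is your observation that for $\alpha=1$ and $h>2\ell$, part $a)$ of Lemma~\ref{lemmacrutial2} is literally unavailable because $|\tau|=\ell<h/2$. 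You are right: the paper only says ``the case $\alpha=1$ follows analogously from Lemma~\ref{lemmacrutial2}'' and does not address this regime, so there is indeed something to check there. The paper is implicitly using that the second-difference expression estimated in Lemma~\ref{lemmatechnical} is symmetric in the roles of the increment and the base-point separation (writing $f(x+w)-f(x)-f(t+w)+f(t)$ and swapping $w$ with $x-t$), so in the regime $\ell<h/2$ one still gets, by running the paper's Taylor expansion in $hw$, the moment cancellation, and Lemma~\ref{lemmatechnical} with the two lengths exchanged, a bound of the form $|\Delta_\sigma f(x,h)-\Delta_\sigma f(x+\tau,h)|\leq C\|f\|_{m,1}\,\ell\, h^m\log(h/\ell+1)$, which then integrates over $(2\ell,1)$ against $dh/h^{m+2}$ to a constant. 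Your alternative — Taylor-expanding in $\tau$, controlling the leading terms $\tau^j\Delta_\sigma(\partial^j f)(x,h)$ via Lemma~\ref{lemmawellknown}~$d)$ applied to $\partial^j f\in C^{m-|j|,1}$, and handling the order-$m$ remainder through a second difference and Lemma~\ref{lemmatechnical} — is a valid route to the same conclusion, at the cost of the extra leading-term bookkeeping which the paper's expansion-in-$hw$ organization avoids. Both routes ultimately lean on the same mechanism: $\sigma(\R^d)=0$ lets you replace the raw first difference of a $\Lambda_1$ function (which would cost a $\log(1/\ell)$) by a second difference, making Lemma~\ref{lemmatechnical} available and the integral summable.
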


\begin{proof}
Assume that $Q' = Q + \ell(Q)e_j$ for some $j$ with $1 \leq j \leq d$. Then 
\begin{equation}\label{increments}
S_Q - S_{Q'} = \int_0^1 \fint_{Q} \Big[\Delta_{\sigma}f(x,h) - \Delta_{\sigma}f(x+ \ell(Q) e_j, h) \Big]dm_d (x)\, \frac{dh}{h^{m+\alpha + 1}} .
\end{equation}
Denote by $I(h)$ the inner integral in \eqref{increments} and split the outer integral in two terms. Then 
$$
S_Q - S_{Q'} = \int_{0}^{\ell(Q)/2} I(h) \frac{dh}{h^{m+\alpha +1}} + \int_{\ell(Q)/2}^1 I(h) \frac{dh}{h^{m+\alpha +1}} =: A + B.
$$
Let us consider $A$ and $B$ separately. Suppose first that $0< \alpha < 1$. Then, by part c) of Lemma \ref{lemmacrutial1} we have 
$$
|I(h)| \leq C \|f\|_{m, \alpha}\|\sigma\|(\ell(Q))^{\alpha -1}h^{m+1} ,
$$
if $0< h < \ell(Q)/2$, where $C= C(d, m)>0$.  
Therefore 
\begin{equation}\label{A} 
A \leq C \|f\|_{m, \alpha} \, \|\sigma\| \, (\ell(Q))^{\alpha -1} \int_0^{\ell(Q)/2}\frac{dh}{h^{\alpha}} .
\end{equation}
As for B, note that
$$
\Delta_{\sigma}f(x, h) - \Delta_{\sigma}f(x+ \ell(Q) e_j, h) = \int_{\R^d} \Big[f(x+hw) - f(x+ \ell(Q)e_j + hw) \Big]d\sigma (w).
$$
Then part a) of Lemma \ref{lemmacrutial1} implies 
$$
|I(h)| \leq C \|f\|_{m, \alpha} \, \|\sigma\| \, (\ell(Q))^{\alpha} h^m,
$$
so 
\begin{equation}\label{B}
B \leq  C \|f\|_{m, \alpha} \, \|\sigma\| \, (\ell(Q))^{\alpha} \int_{\ell(Q)/2}^1 \frac{dh}{h^{1+\alpha}}
\end{equation}
and the result follows combining \eqref{A} and \eqref{B}. The case $\alpha =1$
 follows analogously from Lemma \ref{lemmacrutial2}.
 \end{proof}

\begin{lem}\label{lemmacompar}
Let $m$, $\alpha$, $f$ and $\sigma$ be as in Lemma \ref{lemmaincrements}. Then, there exists a constant $C = C(d, m, \alpha, \sigma ) >0 $ such that for any subcube $Q\subset Q_0$, any $\varepsilon$ with $\ell(Q)/4 \leq \varepsilon \leq \ell(Q)/2$ and each $x\in Q$ we have 
\begin{equation*}
|S_Q - \Theta^{\sigma}_{\varepsilon}f(x) | \leq C \|f\|_{m, \alpha} \, .
\end{equation*}
 \end{lem}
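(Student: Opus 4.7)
The plan is to split the $h$-integral defining $S_Q - \Theta_\varepsilon^\sigma f(x)$ at $h=\varepsilon$, writing it as $I + II$ with
\begin{equation*}
I := \int_0^\varepsilon \fint_Q \Delta_\sigma f(y,h)\, dm_d(y)\, \frac{dh}{h^{m+\alpha+1}},
\end{equation*}
\begin{equation*}
II := \int_\varepsilon^1 \fint_Q \bigl[\Delta_\sigma f(y,h) - \Delta_\sigma f(x,h)\bigr]\, dm_d(y)\, \frac{dh}{h^{m+\alpha+1}}.
\end{equation*}
The two pieces sit in complementary regimes: $I$ in the ``small-$h$'' regime $h<\varepsilon \leq \ell(Q)/2$, where the cube-averaging cancellation of part (c) is available, and $II$ in the ``large-$h$'' regime $h\geq \varepsilon \geq \ell(Q)/4$, in which $|y-x|\leq \sqrt d\,\ell(Q) \leq 4\sqrt d\, h$ and the pointwise comparisons of part (a) become relevant.

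For $I$, I would invoke part (c) of Lemma \ref{lemmacrutial1} (for $0<\alpha<1$) or Lemma \ref{lemmacrutial2} (for $\alpha=1$): the inner average gains a factor $\ell(Q)^{\alpha-1} h^{m+1}$, with an additional $\log(\ell(Q)/h)$ in the Zygmund case. A routine integration over $(0,\varepsilon)$, using $\varepsilon \asymp \ell(Q)$, then yields $|I|\leq C\|f\|_{m,\alpha}$.

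For $II$, in the H\"older case $0<\alpha<1$ part (a) of Lemma \ref{lemmacrutial1} is unconditional and gives $|\Delta_\sigma f(y,h)-\Delta_\sigma f(x,h)|\leq C\|f\|_{m,\alpha}\,\ell(Q)^\alpha h^m$, leading immediately to $|II|\leq C(\ell(Q)/\varepsilon)^\alpha = O(1)$. The main technical point is the Zygmund case $\alpha=1$: part (a) of Lemma \ref{lemmacrutial2} only applies in the sub-range $|y-x|>h/2$, where the logarithm $\log(|y-x|/h+1)$ is bounded by a constant depending on $d$. In the complementary sub-range $|y-x|\leq h/2$ one needs a Lipschitz-type estimate $|\Delta_\sigma f(y,h)-\Delta_\sigma f(x,h)| \leq C\|f\|_{m,1}\,|y-x|\,h^m$ (possibly with a logarithmic factor $\log(h/|y-x|+2)$). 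For $m\geq 1$, this follows by differentiating under the integral via $\partial_j \Delta_\sigma f = \Delta_\sigma(\partial_j f)$ and applying Lemma \ref{lemmawellknown}(d) to $\partial_j f \in C^{m-1,1}$. For $m=0$, where $f$ is only Zygmund, one instead applies Lemma \ref{lemmatechnical} to the mean-zero integrand $[f(y+hw)-f(y)] - [f(x+hw)-f(x)]$ and splits the $w$-integral at $|hw|\asymp|y-x|$; this yields the log-Lipschitz bound and is the main obstacle of the argument. In every case, a change of variable $u=h/|y-x|$, combined with $\varepsilon/|y-x|\geq 1/(4\sqrt d)$, confirms that the resulting $h$-integrand is integrable on $[\varepsilon,1]$, so $|II|\leq C\|f\|_{m,\alpha}$ and the lemma follows.
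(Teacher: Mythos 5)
Your decomposition at $h=\varepsilon$ and the use of parts (c) and (a) of Lemmas \ref{lemmacrutial1}/\ref{lemmacrutial2} on the two pieces is exactly the paper's route, and for $0<\alpha<1$ your argument coincides with theirs. The genuinely valuable content of your proposal is the discussion of $\alpha=1$: the paper dismisses this case with ``follows analogously from Lemma \ref{lemmacrutial2},'' but you are right that part (a) of Lemma \ref{lemmacrutial2} is only stated for $|y-x|>h/2$, and this restriction is not a technicality --- for $m=0$ the bound $C h^{m+1}\log(|y-x|/h+1)$ is genuinely false when $|y-x|\ll h$ (for $f(x)=x\log|x|$ the true size of $\Delta_{\sigma}f(y,h)-\Delta_{\sigma}f(0,h)$ is of order $|y|\log(h/|y|)$, not $|y|$). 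Your fix for $m\ge 1$ (differentiate under the integral, $\partial_j\Delta_{\sigma}f=\Delta_{\sigma}(\partial_j f)$, then Lemma \ref{lemmawellknown}(d) applied to $\partial_j f\in C^{m-1,1}$, noting that the moments of $\sigma$ up to order $m$ vanish) is clean and correct, and the resulting $|y-x|\,h^{m}$ bound integrates as you say. For $m=0$ your proposed $w$-integral split is more delicate than you indicate: for the range $h|w|\gtrsim|y-x|$ the harmonic-extension remainder from \cite[Prop.\ 2.3]{DLlN} is only $O(\|f\|_{\Lambda_1}h|w|)$ and does not gain a factor of $|y-x|/(h|w|)$, so the naive estimate leaves an $O(h)$ term. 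A cleaner way to obtain the log-Lipschitz bound you want is to note that $G:=\Delta_{\sigma}f(\cdot,h)$ is itself a Zygmund function with $\|G\|_{\Lambda_1}\lesssim\|\sigma\|\,\|f\|_{1}$ and, by Lemma \ref{lemmawellknown}(d), $\|G\|_{\infty}\lesssim\|f\|_{1}h$; the standard modulus-of-continuity estimate for Zygmund functions, $\omega_G(\delta)\lesssim (\delta/h)\|G\|_\infty + \|G\|_{\Lambda_1}\,\delta\log(h/\delta)$ for $\delta<h$, then gives exactly $|\Delta_\sigma f(y,h)-\Delta_\sigma f(x,h)|\lesssim \|f\|_1\,|y-x|\big(1+\log(h/|y-x|)\big)$, after which the change of variables $u=h/|y-x|$ and the lower bound $h/|y-x|\gtrsim 1$ closes the argument as you describe.
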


\begin{proof}
Observe that
\begin{align*}
\displaystyle S_Q - \Theta^{\sigma}_{\varepsilon}f(x) = & \int_0^{\varepsilon} \fint_{Q} \Delta_{\sigma}f(y,h)dm_d (y) \frac{dh}{h^{m+ \alpha +1}} \vspace{0.2cm}\\ \displaystyle  & + \int_{\varepsilon}^1 \fint_{Q} [\Delta_{\sigma}f(y,h) - \Delta_{\sigma}f(x,h)]dm_d (y) \frac{dh}{h^{m+\alpha +1}} \vspace{0.2cm} \\ =: & \,  A + B .
\end{align*}

 In the rest of the lemma $C'$ denotes successive constants of the form $\displaystyle C \|f\|_{m, \alpha} \,  $ where $C = C(d,m, \alpha, \sigma ) >0$. Suppose first that $0< \alpha < 1$. From Lemma \ref{lemmacrutial1} we get
 \begin{align*}
A  & \leq  \,  C' (\ell(Q))^{\alpha -1}\int_0^{\varepsilon} \frac{dh}{h^{\alpha}}  \leq \, \, C', \vspace{0.2cm}\\
B & \leq \,  C' (\ell(Q))^{\alpha } \int_{\varepsilon}^1 \frac{dh}{h^{\alpha +1}} \leq \, \, C' 
\end{align*}
and \eqref{comparison} follows. 
The case $\alpha =1$ follows analogously from Lemma \ref{lemmacrutial2}. 
\end{proof}

\begin{proof}[Proof of Theorem \ref{main}]
As in the previous lemma, $C'$ will denote successive constants of the form $\displaystyle C \|f\|_{m, \alpha} \, $ where $C = C(d,m, \alpha , \sigma) >0$. Given $f$, define, for  every $Q \in \mathcal{D}_n$
\begin{equation}\label{martingale}
S_{n}|_{Q} \equiv S_{Q}
\end{equation}
as in \eqref{defmartingale}. It is clear that \eqref{martingale} defines a dyadic martingale in $Q_0$. From Lemma \ref{lemmaincrements} it follows that $\displaystyle \big | S_{Q} - S_{Q'} \big | \leq C'$ whenever $Q$, $Q' \in \mathcal{D}_n$ are adjacent. This, together with the martingale property, implies that $\{ S_n \}$ has uniformly bounded increments. From Lemma \ref{lemmacompar} it also follows that 
\begin{equation}\label{comparison}
\|S_n  - \Theta^{\sigma}_{\varepsilon}f \|_{\infty }\ \leq C',
\end{equation}
for each $n$ and any $\varepsilon$ such that $2^{-n-2} \leq \varepsilon \leq 2^{-n -1} $. Theorem \ref{main} then follows from \eqref{comparison} and the Law of the Iterated Logarithm (\ref{lil}) applied to the martingale $\{ S_n \}$. 
\end{proof}

\begin{proof}[Proof of Corollary \ref{bounded}]
Since the martingale $\{S_n\}$ has uniformly bounded increments, the set $\{x \in \R^d : \sup_n |S_n (x)| < \infty \}$ has Hausdorff dimension $d$.  Hence the result follows from \eqref{comparison}. 
\end{proof}

\section{The case of Lipschitz functions on the real line. Connection to Calder\'on-Zygmund operators (Proof of Theorem \ref{CZthm})}\label{sect4}

The goal of this section is to proof Theorem \ref{CZthm}.
Our approach is based on Calder\'on-Zygmund theory. For the sake of completeness we recall the fundamental tool that we are going to use.

\begin{thm}{\cite[Theorem 4.4.5]{G}}\label{Th:Grafakos}
	Assume that $\K$ is a locally integrable function on $\R^d \setminus \{0\}$ which satisfies the size condition
\begin{equation}\label{eq:Gra4.4.1}
	\sup_{R>0} \int_{R \leq |x| \leq 2R} |\K(x)| dx =:A_1<\infty,
\end{equation}
the smoothness condition
\begin{equation}\label{eq:Gra4.4.2}
	\sup_{y \neq 0} \int_{ |x| \geq 2|y|} |\K(x-y)-\K(x)| dx =:A_2<\infty,
\end{equation}
and the cancellation condition
\begin{equation}\label{eq:Gra4.4.3}
	\sup_{0<R_1<R_2<\infty} \Big|\int_{R_1 \leq |x| \leq R_2 } \K(x) dx \Big| =:A_3<\infty,
\end{equation}
for certain $A_1, A_2, A_3 >0$. Let $\T_*$ be the maximal singular integral given by
$$\T_*(f)(x)
	:= \sup_{0<\varepsilon <N < \infty} \Big|\K_{\varepsilon,N}*f(x)\Big|,$$
where 
$\K_{\varepsilon,N}(x):= \K(x) \1_{\{\varepsilon \leq | x| \leq N \}}(x).$
Then, $\T_*$ is bounded on $L^p(\R^d)$, $1<p<\infty$, with norm
$$\| \T_* \|_{L^p(\R^d) \to L^p(\R^d)}
	\leq C_d \max\{p,(p-1)^{-1}\}(A_1 + A_2 + A_3).$$
\end{thm}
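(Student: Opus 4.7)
The plan is to carry out the classical three-step Calder\'on-Zygmund programme. First, I would establish uniform $L^2$ bounds for the (non-maximal) truncated convolution operators $T_{\varepsilon,N}f := K_{\varepsilon,N}*f$. Second, I would upgrade these to uniform weak type $(1,1)$ bounds via the Calder\'on-Zygmund decomposition, and then use Marcinkiewicz interpolation together with duality to obtain uniform $L^p$ bounds for $T_{\varepsilon,N}$ throughout $1<p<\infty$ with the precise dependence $C_d \max\{p,(p-1)^{-1}\}(A_1+A_2+A_3)$. Finally, I would control $T_*$ from the uniform $L^p$ bound for $T_{\varepsilon,N}$ by means of a pointwise Cotlar-type inequality.

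For the first step I would work on the Fourier side and prove $\|\widehat{K_{\varepsilon,N}}\|_\infty \leq C(A_1+A_2+A_3)$ uniformly in $0<\varepsilon<N$. Fix $\xi\neq 0$ and set $R=|\xi|^{-1}$. Split the integral defining $\widehat{K_{\varepsilon,N}}(\xi)$ at radius $R$. On the inner region $|x|\leq R$, the cancellation condition \eqref{eq:Gra4.4.3} lets me subtract the constant contribution and then exploit $|e^{-2\pi i x\cdot\xi}-1|\lesssim |x||\xi|$; summing the resulting estimates over dyadic annuli through \eqref{eq:Gra4.4.1} gives a bound of order $A_1+A_3$. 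On the outer region $|x|>R$ the standard symmetrisation trick, writing the oscillatory integral as $\tfrac12 \int K(x)[e^{-2\pi i x\cdot\xi} - e^{-2\pi i(x-y_0)\cdot\xi}]\,dx$ with $y_0 := \xi/(2|\xi|^2)$ and changing variables in one of the two copies, reduces matters to the H\"ormander condition \eqref{eq:Gra4.4.2} and contributes $O(A_2)$. Plancherel then produces the uniform $L^2$ estimate with the right constant.

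The second step is the Calder\'on-Zygmund decomposition. Given $f\in L^1(\R^d)$ and $\lambda>0$, write $f=g+b$ at height $\lambda$ with $\|g\|_\infty\lesssim\lambda$, $\|g\|_1\leq\|f\|_1$, and $b=\sum_j b_j$ where $\mathrm{supp}(b_j)\subset Q_j$, $\int b_j = 0$, and $\sum |Q_j|\lesssim\lambda^{-1}\|f\|_1$. The good part is handled by the $L^2$ estimate and Chebyshev. For the bad part, outside $E:=\bigcup_j 2Q_j$ the mean-zero property of $b_j$ combined with \eqref{eq:Gra4.4.2} applied with $y$ the center of $Q_j$ yields
\begin{equation*}
\int_{\R^d\setminus 2Q_j}|T_{\varepsilon,N}b_j(x)|\,dx \leq C A_2 \|b_j\|_1 ,
\end{equation*}
summing to the weak $(1,1)$ bound with a constant uniform in $\varepsilon, N$. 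Marcinkiewicz interpolation between weak $(1,1)$ and strong $(2,2)$ handles $1<p\leq 2$ with the factor $(p-1)^{-1}$, and duality (the transpose kernel $\widetilde{K}(x):=K(-x)$ satisfies \eqref{eq:Gra4.4.1}--\eqref{eq:Gra4.4.3} with the same constants) handles $2\leq p<\infty$ with the factor $p$.

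It remains to bound the maximal operator. I would establish the Cotlar-type pointwise inequality
\begin{equation*}
T_* f(x) \leq C\bigl(M(T_{1,\infty}f)(x) + (A_1+A_2+A_3)\,Mf(x)\bigr) ,
\end{equation*}
valid for Schwartz $f$ and $x\in\R^d$, by comparing $T_{\varepsilon,N}f(x)$ with the value of $T_{1,\infty}f$ at generic points $z$ near $x$, controlling the difference through \eqref{eq:Gra4.4.1}--\eqref{eq:Gra4.4.3}, and finally averaging over $z$ in a small ball $B(x,\varepsilon/2)$ via a Kolmogorov-type argument that invokes the weak $(1,1)$ bound just obtained. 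Taking $L^p$ norms and combining the Hardy--Littlewood maximal theorem with the uniform $L^p$ bound for $T_{1,\infty}$ from the previous step closes the argument. The main obstacle is the uniform $L^2$ estimate, because the hypotheses do \emph{not} provide a pointwise bound $|K(x)|\leq A|x|^{-d}$; one must squeeze $\|\widehat{K_{\varepsilon,N}}\|_\infty$ out of only the averaged size and cancellation conditions \eqref{eq:Gra4.4.1} and \eqref{eq:Gra4.4.3}, which is the delicate part of the proof; once it is in hand, the Calder\'on-Zygmund decomposition and Cotlar's inequality are routine.
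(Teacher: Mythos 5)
The paper itself offers no proof of this statement: it is quoted verbatim from Grafakos \cite[Theorem 4.4.5]{G}, so the comparison can only be with the textbook argument. Your first two steps do follow that treatment and are essentially fine: the uniform bound $\|\widehat{\K_{\varepsilon,N}}\|_{\infty}\lesssim A_1+A_2+A_3$ obtained by splitting at $|x|\sim |\xi|^{-1}$ and symmetrising (the change of variables also produces boundary-shell terms that must be absorbed by $A_1$), and then Calder\'on--Zygmund decomposition, interpolation and duality for the truncations $T_{\varepsilon,N}f:=\K_{\varepsilon,N}*f$. One detail there: the weak $(1,1)$ estimate requires the H\"ormander condition for the truncated kernels $\K_{\varepsilon,N}$ \emph{uniformly} in $\varepsilon,N$ (with constant $\lesssim A_1+A_2$); applying \eqref{eq:Gra4.4.2} for $\K$ alone, as you write, is not literally enough, although the extra boundary terms are easily controlled by $A_1$.

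The genuine gap is the third step. The pointwise inequality $\T_*f(x)\le C\bigl(M(T_{1,\infty}f)(x)+(A_1+A_2+A_3)Mf(x)\bigr)$ is false, already for the Hilbert kernel $\K(t)=1/t$ on $\R$: take $f$ smooth, equal to $1$ on $[0,1]$ and supported in $[-\delta,1+\delta]$. Then $Mf\le 1$ everywhere, and $|T_{1,\infty}f(z)|=\bigl|\int_{|t|>1}f(z-t)\,dt/t\bigr|\le 1+2\delta$ for every $z$, since the support of $f(z-\cdot)$ meets $\{|t|>1\}$ in a set of measure at most $1+2\delta$ on which $1/|t|\le 1$; yet at $x=1-2\delta$, choosing $\varepsilon=3\delta$ and $N=1/2$ gives $\K_{\varepsilon,N}*f(x)\ge\int_{3\delta}^{1/2}dt/t=\log\bigl(1/(6\delta)\bigr)$, which is unbounded as $\delta\to0$. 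The point is structural: Cotlar's inequality needs the \emph{untruncated} operator $Tf$ on the right (for this example $Hf(x)\sim\log(1/\delta)$ near $x=1$, which is why the classical inequality is consistent), and a truncation frozen at the fixed radius $1$ cannot absorb truncations at scales far from $1$, because the discrepancy $\int_{\varepsilon<|x-y|<1}\K(x-y)f(y)\,dy$ is itself a full-size truncated singular integral, not an error term. Moreover, under the present hypotheses the repair is not routine: \eqref{eq:Gra4.4.3} gives only boundedness, not convergence, of $\int_{R_1\le|x|\le R_2}\K$, so the truncations need not converge and there is no canonical $T$ to compare with; and even after extracting a weak-$*$ limit via the multiplier bound, the standard Cotlar error estimates use pointwise bounds $|\K(x)|\lesssim|x|^{-d}$ and pointwise smoothness to produce the factor $Mf(x)$, whereas here only the averaged conditions \eqref{eq:Gra4.4.1}--\eqref{eq:Gra4.4.2} are available, and quantities such as $\int_{\varepsilon\le|x-y|\le2\varepsilon}|\K(x-y)||f(y)|\,dy$ are not pointwise controlled by $A_1\,Mf(x)$. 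So the maximal-truncation step --- not the $L^2$ bound --- is where the real difficulty of the quoted theorem lies, and your sketch leaves it unproved; this is precisely the part where the argument in \cite{G} has to deviate from the classical Cotlar scheme.
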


A locally integrable function $\K$ on $\R^d \setminus \{0\}$ which satisfies the conditions \eqref{eq:Gra4.4.1}, \eqref{eq:Gra4.4.2} and \eqref{eq:Gra4.4.3} is called a Calder\'on-Zygmund kernel. 

The first step consists on writing $\widetilde{\Theta}_{\varepsilon}^{\sigma}$ as a convolution operator.

\begin{lem}\label{Lem:Lip1conv}
Let $0<\varepsilon<1$, $f \in Lip(\R)$ compactly supported and $\sigma$ be a compactly supported (signed) Borel measure on $\R$ with finite total variation satisfying $\sigma(\R)=0$. Then, 
$$\widetilde{\Theta}_{\varepsilon}^{\sigma} (f)(x)
	= K_\varepsilon * f'(x), \quad x \in \R,$$
where
$$K_\varepsilon (t)
	:= \frac{1}{t} \int_{-t/\varepsilon}^{-t} \sigma[s,\infty) ds,
    \quad t \in \R.$$
\end{lem}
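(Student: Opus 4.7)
The plan is to unfold the definition of $\widetilde{\Theta}_\varepsilon^\sigma f$ step by step: first move from $f$ to $f'$ by exploiting the cancellation $\sigma(\R)=0$, then apply Fubini to realize the result as a convolution against $f'$, and finally identify the kernel via a change of variables. Fubini is legitimate throughout, since $f$ is Lipschitz with compact support (so $f'\in L^\infty$ with compact support) and $\sigma$ is a compactly supported signed measure of finite total variation.

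For the first reduction, I would use $\sigma(\R)=0$ to write $\Delta_\sigma f(x,h) = \int_\R [f(x+hw)-f(x)]\,d\sigma(w)$ and express the increment through the derivative:
\[
f(x+hw)-f(x)=\int_\R f'(x+u)\bigl[\1_{(-\infty,hw)}(u)-\1_{(-\infty,0)}(u)\bigr]\,du.
\]
After the substitution $t=-u$ this becomes $\int_\R f'(x-t)\bigl[\1_{(-hw,\infty)}(t)-\1_{(0,\infty)}(t)\bigr]\,dt$. Swapping the $t$- and $w$-integrations by Fubini, the inner integral in $w$ collapses, once more thanks to $\sigma(\R)=0$, to
\[
\int_\R \bigl[\1_{(-hw,\infty)}(t)-\1_{(0,\infty)}(t)\bigr]\,d\sigma(w) = \sigma\bigl((-t/h,\infty)\bigr).
\]
Plugging this back into the definition of $\widetilde{\Theta}_\varepsilon^\sigma f(x)$ and swapping the remaining $h$- and $t$-integrals gives
\[
\widetilde{\Theta}_\varepsilon^\sigma f(x)=\int_\R f'(x-t)\left(\int_\varepsilon^1 \sigma\bigl((-t/h,\infty)\bigr)\,\frac{dh}{h^2}\right)dt.
\]

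To finish, I would substitute $s=-t/h$ in the inner integral. A direct computation gives $dh/h^2 = ds/t$, and as $h$ runs over $[\varepsilon,1]$ the variable $s$ traverses $[-t/\varepsilon,-t]$ (with the orientation working out in both signs of $t$), so the inner integral equals $(1/t)\int_{-t/\varepsilon}^{-t}\sigma((s,\infty))\,ds$, which is precisely $K_\varepsilon(t)$ (the distinction between $\sigma((s,\infty))$ and $\sigma[s,\infty)$ is irrelevant in a Lebesgue integral). The only point requiring care is the sign bookkeeping in this final change of variables; everything else is essentially formal once Fubini has been invoked.
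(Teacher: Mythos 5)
Your argument is correct and follows essentially the same route as the paper: use $\sigma(\R)=0$ to write $\Delta_\sigma f(x,h)=\int_\R[f(x+hw)-f(x)]\,d\sigma(w)$, express the increment via the fundamental theorem of calculus, apply Fubini (justified by the compact supports and $f'\in L^\infty$), and finish with the change of variables $s=-t/h$. The only cosmetic difference is your indicator-function representation of the increment, which unifies the two cases $w>0$ and $w<0$ that the paper's proof treats by splitting the $w$-integral; this is a tidy way to avoid the case distinction but does not change the substance of the argument.
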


\begin{proof}
Fix $x \in \R$. Taking into account that $\sigma(\R)=0$ and an application of the fundamental theorem of calculus give us
\begin{align*}
& \widetilde{\Theta}_{\varepsilon}^{\sigma} (f)(x)
     = \int_{\varepsilon}^1 \int_\R [f(x+hw)-f(x)] d\sigma(w)\frac{dh}{h^2} \\
    & \qquad = \int_{\varepsilon}^1 \int_\R \int_0^{hw}f'(x+t) dt d\sigma(w)\frac{dh}{h^2} \\
    & \qquad = \int_{\varepsilon}^1 \int_0^\infty \int_0^{hw}f'(x+t) dt d\sigma(w)\frac{dh}{h^2}
    - \int_{\varepsilon}^1 \int_{-\infty}^0 \int_{hw}^0 f'(x+t) dt d\sigma(w)\frac{dh}{h^2}.
\end{align*}
Next we use Fubini's theorem, which can be properly justified because $f$ and $\sigma$ have compact support, to write
\begin{align*}
 \widetilde{\Theta}_{\varepsilon}^{\sigma} (f)(x)
 & = \int_0^\infty f'(x+t) \Big(\int_{\varepsilon}^1  \int_{[t/h, +\infty)}  d\sigma(w)\frac{dh}{h^2} \Big) dt \\
 & \qquad - \int_{-\infty}^0 f'(x+t) \Big(\int_{\varepsilon}^1  \int_{(-\infty , t/h)} d\sigma(w)\frac{dh}{h^2}\Big) dt\\
& = \int_0^\infty f'(x+t) \Big(\int_{\varepsilon}^1  \sigma[t/h,\infty) \frac{dh}{h^2} \Big) dt \\
& \qquad - \int_{-\infty}^0 f'(x+t) \Big(\int_{\varepsilon}^1 \sigma(-\infty,t/h) \frac{dh}{h^2}\Big) dt \\
& = \int_\R f'(x+t) \Big(\int_{\varepsilon}^1  \sigma[t/h,\infty) \frac{dh}{h^2} \Big) dt,
\end{align*}
where in the last step we used again that $\sigma(\R)=0$.
Finally, a few change of variables yield to
\begin{align*}
\widetilde{\Theta}_{\varepsilon}^{\sigma} (f)(x)
 & = \int_\R f'(x-t) \Big(\int_{\varepsilon}^1  \sigma[-t/h,\infty) \frac{dh}{h^2} \Big) dt \\
 & = \int_\R f'(x-t) \Big(\frac{1}{t}\int_{-t/\varepsilon}^{-t}  \sigma[s,\infty) ds \Big) dt \\
 & =: (K_\varepsilon * f')(x).
\end{align*}
\end{proof}

Of special interest will be the endpoint kernel $K_0$. 
Next, we analyze its size, smoothness and cancellation properties. 

\begin{lem}\label{Lem:estimacionesK0}
Let $\sigma$ be a (signed) Borel measure on $\R$ with finite total variation supported in the interval
$(-M,M)$, $M>0$, and satisfying  \eqref{momentos} for $k=0,1$. 
Define
$$K_0 (t)
	:= \frac{1}{t} \int_{-\sign(t) M}^{-t} \sigma[s,\infty) ds,
    \quad t \in \R \setminus\{0\}.$$
Then, 
\begin{itemize}
	\item[$a)$] $\displaystyle |K_0 (t)| \leq \frac{2M \|\sigma \|}{|t|}, \quad t \in \R \setminus\{0\},$
    \item[$b)$] $\displaystyle |\partial_t K_0 (t)| \leq \frac{3M \|\sigma \|}{t^2}, \quad t \in \R \setminus\{0\},$
    \item[$c)$] $\displaystyle \sup_{0<a<b<\infty} \Big| \int_{a<|t|<b} K_0 (t) dt \Big| < 3M \|\sigma \|.$
\end{itemize}
\end{lem}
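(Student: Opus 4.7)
My plan is to handle the three estimates by direct computation with the tail function $\Sigma(s) := \sigma[s,\infty)$, exploiting both cancellation conditions $\sigma(\R)=0$ and $\int s\, d\sigma(s) = 0$. I would begin with three preliminary observations that set the stage: since $\sigma$ is supported in $(-M,M)$ and $\sigma(\R) = 0$, the function $\Sigma$ is supported in $[-M,M]$ and $|\Sigma|\leq \|\sigma\|$; by Fubini, the vanishing first moment translates into the key identity $\int_{-M}^{M}\Sigma(s)\,ds = 0$; and from the support property it follows that $K_0(t) = 0$ for $|t| \geq M$, so it suffices to consider $0<|t|<M$ throughout.

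Part (a) is then immediate: for $0 < t < M$ the integrand in $K_0(t)$ has length $M-t$ and sup-bound $\|\sigma\|$, giving $|K_0(t)| \leq M\|\sigma\|/t \leq 2M\|\sigma\|/|t|$; the case $t < 0$ is symmetric.

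For Part (b), I would write $K_0(t) = F(t)/t$ with $F(t) = \int_{-M}^{-t}\Sigma(s)\,ds$, so that $F'(t) = -\Sigma(-t)$ and
\[
K_0'(t) = -\frac{F(t) + t\Sigma(-t)}{t^2}.
\]
Bounding the numerator naively only yields a $1/t$ decay, so the key is the algebraic identity
\[
F(t) + t\Sigma(-t) = M\Sigma(-t) + \int_{-M}^{-t}\bigl[\Sigma(s) - \Sigma(-t)\bigr]\,ds,
\]
obtained by splitting $\Sigma(s) = \Sigma(-t) + (\Sigma(s)-\Sigma(-t))$ inside $F$ and using $\int_{-M}^{-t} ds = M-t$. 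Each summand is bounded by $M\|\sigma\|$, yielding the required $|K_0'(t)| \leq 3M\|\sigma\|/t^2$.

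The main obstacle is Part (c), which requires pairing the two halves $\{a<t<b\}$ and $\{-b<t<-a\}$. A short computation gives $K_0(-t) = t^{-1}\int_t^M \Sigma(s)\,ds$ for $t>0$, and summing the two contributions yields
\[
\int_{a<|t|<b} K_0(t)\,dt = -\int_a^b \frac{1}{t}\int_{-t}^{t}\Sigma(s)\,ds\,dt,
\]
where the crucial cancellation $\int_{-M}^{M}\Sigma = 0$ has been used to rewrite $\int_{-M}^{-t}+\int_t^M = -\int_{-t}^t$. Since $|\int_{-t}^{t}\Sigma(s)\,ds| \leq 2t\|\sigma\|$ and vanishes for $t \geq M$, the outer integral is bounded by $2M\|\sigma\| < 3M\|\sigma\|$ uniformly in $0<a<b<\infty$. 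Without the second moment condition this combined integral could grow logarithmically, so spotting this pairing is the essential step.
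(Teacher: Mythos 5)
Your proof is correct, and the underlying strategy coincides with the paper's: work with the tail function $\Sigma(s)=\sigma[s,\infty)$, observe it is supported in $[-M,M]$ with $\|\Sigma\|_\infty\leq\|\sigma\|$, and for part~(c) pair the contributions from $\pm t$ and exploit the vanishing first moment. The execution differs in two places, both to your advantage. In part~(b) the paper simply writes $K_0'(t)=-F(t)/t^2-\Sigma(-t)/t$ and bounds the two terms by $2M\|\sigma\|/t^2$ (reusing part~(a)) and $M\|\sigma\|/t^2$ (via $|\alpha\,\sigma[\alpha,\infty)|\leq M\|\sigma\|$), whereas your algebraic rewriting $F(t)+t\Sigma(-t)=M\Sigma(-t)+\int_{-M}^{-t}[\Sigma(s)-\Sigma(-t)]\,ds$ is a touch more elaborate but actually yields the sharper constant $2M\|\sigma\|/t^2$. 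In part~(c) the paper replaces $\Sigma$ by its defining double integral, applies Fubini, and then handles three separate cases $M\leq a$, $a<M<b$, $a<b\leq M$; by contrast your observation that the first moment condition is equivalent to $\int_{-M}^{M}\Sigma(s)\,ds=0$ lets you collapse immediately to $\int_{a<|t|<b}K_0=-\int_a^b t^{-1}\int_{-t}^t\Sigma\,ds\,dt$, and noting this inner integral vanishes for $t\geq M$ makes the case analysis unnecessary and gives the cleaner bound $2M\|\sigma\|$. So the ideas are the same, but your streamlined use of $\int\Sigma=0$ in part~(c) is genuinely tidier than the paper's Fubini-plus-cases computation.
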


\begin{proof}
To prove $a)$ simply observe that
$$|K_0 (t)|
	\leq \frac{1}{|t|} \int_{-M}^{M} |\sigma[s,\infty)| ds
    \leq \frac{2 M \|\sigma \|}{|t|}, \quad t \in \R \setminus\{0\}.$$

To establish $b)$ we write
\begin{align*}
	\partial_t K_0 (t)
    	& = -\frac{1}{t^2} \int_{-\sign(t) M}^{-t} \sigma[s,\infty) ds - \frac{\sigma[-t,\infty)}{t}, \quad t \in \R \setminus\{0\}.
\end{align*}
Moreover,
$$|\alpha \sigma[\alpha,\infty)| \leq M \|\sigma \|, \quad \alpha \in \R.$$
Hence,
\begin{align*}
|\partial_t K_0 (t)|
    & \leq \frac{2 M \|\sigma \|}{t^2} 
    + \frac{|-t \sigma[-t,\infty)|}{t^2}
     \leq \frac{3 M \|\sigma \|}{t^2} , \quad t \in \R \setminus\{0\}.
\end{align*}

The proof of $c)$ is more subtle because the cancellations of the kernel play an important role. Fix $0<a<b<\infty$. Since $\sigma(\R)=0$ we can write
\begin{align*}
 \int_{a<|t|<b} K_0 (t) dt
	& = \Big(\int_{-b}^{-a} + \int_a^b\Big) \int_{-\sign(t) M}^{-t} \int_s^M d\sigma(w) ds \frac{dt}{t} \\
    & = \int_a^b \int_{t}^{M} \Big(\int_s^M - \int_{-M}^{-s}\Big) d\sigma(w) ds \frac{dt}{t} \\
    & = \int_a^b \int_{t}^{M} \Big( \sigma[s,M) - \sigma(-M,-s]\Big) ds \frac{dt}{t}.
\end{align*}
Notice that when $s \geq M$, $\sigma[s,M)=\sigma(-M,-s]=0$ and then the last integral vanishes.
The analysis is clearer if we consider different cases.\\

\underline{Case 1: $M \leq a$}. Since $M \leq a \leq t \leq s$,  
$$ \int_{a<|t|<b} K_0 (t) dt=0.$$

\underline{Case 2: $a<M<b$}. The integral when $M \leq t \leq b$ vanishes. For the remaining part we apply Fubini's theorem to get
\begin{align*}
 \int_{a<|t|<b} K_0 (t) dt
    & = \int_a^M \Big[\int_{t}^{M} \Big(\int_s^M - \int_{-M}^{-s}\Big) d\sigma(w) ds \Big]\frac{dt}{t} \\
    & = \int_a^M \Big[\Big(\int_{t}^{M} \int_t^w - \int_{-M}^{-t} \int_t^{-w}\Big) ds d\sigma(w)  \Big]\frac{dt}{t} \\
    & = \int_a^M \Big[\int_{t}^{M} (w-t) d\sigma(w) + \int_{-M}^{-t} (w+t) d\sigma(w) \Big]\frac{dt}{t} \\
    & = \int_a^M \Big[- \int_{-t}^{t} w d\sigma(w) - t \sigma[t,M) + t \sigma(-M,t] \Big]\frac{dt}{t},
\end{align*}
where we have used \eqref{momentos} for $k=1$ in the last step. Thus,
\begin{align*}
 \Big|\int_{a<|t|<b} K_0 (t) dt \Big|
    & \leq 3 M \|\sigma \|.
\end{align*}

\underline{Case 3: $a < b \leq M$}. This situation can be essentially treated as in Case 2.
\end{proof}

Now we are in position to prove Theorem \ref{CZthm}.

\begin{proof}[Proof of Theorem \ref{CZthm}.]
By Lemma~\ref{Lem:Lip1conv},
for every $x \in \R$, we have that
\begin{equation*}
 \widetilde{\Theta}_{\varepsilon}^{\sigma} (f)(x) =   I_1(x) + I_2(x),
\end{equation*}
where
$$
I_1 (x) = \int_{|t| \leq \varepsilon M} K_\varepsilon(t) f'(x-t) dt  
$$
and 
$$
I_2 (x) = \int_{|t| > \varepsilon M} K_\varepsilon(t) f'(x-t) dt  .
$$
Observe that
$$K_\varepsilon(t)-K_0(t)=0, \quad |t| \geq \varepsilon M, \ 0<\varepsilon <1,$$
and
$$K_0(t)=0, \quad |t| \geq  M.$$
Thus, we can write
\begin{equation*}
I_2(x) = \int_{\varepsilon M < |t| < M} K_0(t) f'(x-t) dt .  
\end{equation*}
Now the first part of Theorem\ref{CZthm} follows from the the easy estimates 
\begin{align*}
& \sup_{0 < \varepsilon < 1}  |\widetilde{\Theta}_{\varepsilon}^{\sigma}f(x) - I_2 (x) | =  
 \sup_{0<\varepsilon<1} |I_1(x)| \leq \\
 &   \leq \|f'\|_{L^\infty(\R)} \sup_{0<\varepsilon<1}  \int_{|t| \leq \varepsilon M} |K_\varepsilon(t)| dt 
  \leq \|f'\|_{L^\infty(\R)} 2 M \|\sigma \|, \quad x \in \R.
\end{align*}
Consider
$$
\T_*(f')(x) = \sup_{0<\varepsilon<N<\infty} \Big| \int_{\varepsilon  < |t| < N} K_0(t) f'(x-t) dt\Big|, 
 \quad x \in \R .
$$
Since the conclusions of Lemma~\ref{Lem:estimacionesK0} are stronger than \eqref{eq:Gra4.4.1}, \eqref{eq:Gra4.4.2} and \eqref{eq:Gra4.4.3}, Theorem~\ref{Th:Grafakos} implies that
$$\|\T_*(f')\|_{L^2(\R)}
	\lesssim \| f' \|_{L^2(\R)}.$$
Note that $f' \in L^2(\R)$ because it is bounded and compactly supported. In particular, we deduce that
$$I_2(x) \leq \T_*(f')(x) < \infty, \quad \text{a.e. } x \in \R.$$
Therefore
$$
\sup_{0<\varepsilon<1} | \widetilde{\Theta}_{\varepsilon}^{\sigma}f(x)| < \infty \, .
$$

\end{proof}


\end{document}